\newcommand{\tr}[1]{\mathrm{tr}\,{#1}}
\newcommand{\e}{\varepsilon}
\newcommand{\R}{\mathbb R}
\newcommand{\beq}{\begin{equation}}
\newcommand{\eeq}{\end{equation}}
\newcommand{\dive}{\mathrm{div}\,}
\newcommand{\curl}{\mathrm{curl}\,}
\newcommand{\dz}{\partial_z}
\newcommand{\dx}{\partial_x}
\newcommand{\dy}{\partial_y}
\newcommand{\dxx}{\dx^2}
\newcommand{\dxz}{\dx\dz}
\newcommand{\dzx}{\dz\dx}
\newcommand{\ro}{\mathrm{R}\,}
\newcommand{\mn}{m_n}
\renewcommand\mp{m^+}
\renewcommand\mn{m_n}
\newcommand{\mm}{m^-}
\newcommand{\mmo}{\mm_1}
\newcommand{\mpo}{\mp_1}
\newcommand{\mres}{\mathbin{\vrule height 1.6ex depth 0pt width
0.13ex\vrule height 0.13ex depth 0pt width 1.3ex}}
\newcommand{\parallelsum}{\mathbin{\!/\mkern-5mu/\!}}
\newtheorem{theorem}{Theorem}[section]
\newtheorem{proposition}[theorem]{Proposition}
\newtheorem{lemma}[theorem]{Lemma}
\newtheorem{remark}[theorem]{Remark}
\title{\bf Compactness and sharp lower bound  for a 2D smectics model}
\author{Michael Novack\thanks{%
Department of Mathematics, The University of Texas at Austin, Austin, TX, USA }
\and Xiaodong Yan\thanks{%
Department of Mathematics, The University of Connecticut, Storrs, CT, USA } }
\begin{document}

\maketitle

% REQUIRED
\begin{abstract}
We consider a 2D smectics model%
\begin{equation*}
E_{\epsilon }\left( u\right) =\frac{1}{2}\int_\Omega \frac{1}{\varepsilon }\left( u_{z}-\frac{1%
}{2}u_{x}^{2}\right) ^{2}+\varepsilon \left( u_{xx}\right) ^{2}dx\,dz.
\end{equation*}%
For $\varepsilon _{n}\rightarrow 0$ and a sequence $\left\{ u_{n}\right\} $
with bounded energies $E_{\varepsilon _{n}}\left( u_{n}\right) ,$
we prove compactness of {\color{black}$\{\dz u_{n}\}$ in $L^{2}$ and $\{\dx u_n\}$ in $L^q$ for any $1\leq q<p$ under the additional assumption $\| \dx u_{n}\| _{L^{p }}\leq C$ for some $p>6$}. We also prove a sharp lower bound on $E_{\varepsilon }$ when $\varepsilon\rightarrow 0.$ The sharp bound corresponds to the energy of a 1D ansatz in the transition region.
\end{abstract}

% REQUIRED
%\begin{keywords}
%. . .
%\end{keywords}

\section{Introduction}\label{sec:intro}

Liquid crystal phases occur when a material exhibits characteristics of a crystalline solid while also retaining the ability to flow like a liquid. Smectic-A liquid crystals (smectics) consist of a stack of uniformly spaced layers of liquid which forms a one dimensional density
wave. The molecules in each layer tend to align in the direction parallel to the layer normal. Smectics are typically described \cite{CL95, deGP} by a complex order parameter $%
\Psi ,$ where the magnitude of $\Psi$ describes the smectic order and
the level sets of the phase $\Phi =\textup{Arg }\Psi $ determine the smectic layers. This is achieved through the introduction of the molecular mass density $\rho$, given at a point $\bold{x}=(x,y,z)$ by
\begin{equation*}
\rho =\rho _{0}+\rho _{1}\cos \left[ \frac{2\pi }{a}\Phi \left( \bold{x}\right) %
\right] ,
\end{equation*}%
where $\rho _{0}$ is a locally uniform mass density, $\rho _{1}$ is the density of the layers, and $a$ is the uniform spacing between layers. Smectic layers are defined as
peaks of the density wave where $\Phi \left( \bold{x}\right) \in a\mathbf{Z.}$

The free energy of a smectic liquid crystal \cite{SanKam05} over a sample volume $\Omega,$
expressed in terms of the phase $\Phi ,$ is 
\begin{equation}
F=\frac{B}{2}\int_{\Omega}\left[ \left( 1-\left\vert \nabla \Phi \right\vert
\right) ^{2}+\lambda ^{2}\mathbf{H}^{2}\right] dx\,dy \,dz,
\label{smectics-freeenergy}
\end{equation}%
where $\mathbf{H=\nabla \cdot N}$ is the mean curvature and $\mathbf{N}=%
\frac{\nabla \Phi }{\left\vert \nabla \Phi \right\vert }$ is the unit normal
vector for the layers. Here $B$ and $K_{1}=B\lambda ^{2}$ are the
bulk and bend moduli, respectively. The constant $\lambda $ (bend penetration depth) is the intrinsic length scale that sets the scale of deformations. The first term in $\left( \ref{smectics-freeenergy}\right) $ accounts
for the compression strain between layers and the second term represents the
bending energy. When boundaries are present, there is an additional term
coming from the Gaussian curvature 
\begin{equation*}
F_{K}=\widetilde{K}\int_{\Omega}\nabla \cdot \left[ \left( \nabla \cdot \mathbf{N}%
\right) \mathbf{N-}\left( \mathbf{N\cdot \nabla }\right) \mathbf{N}\right]
dx\,dy\,dz.
\end{equation*}%
Since this term is a total derivative which reduces to a boundary integral
and does not play a role in minimization of the energy under fixed boundary conditions, it is often omitted from the free energy.

Inspection of the total energy \eqref{smectics-freeenergy} reveals that the compression term prefers equally spaced layers while the bending term prefers layers with zero mean
curvature, which are minimal surfaces. The typical ground state minimizing \eqref{smectics-freeenergy} is $\Phi(\bold{x}) = \bold{x} \cdot n$ for fixed $n\in \mathbb{S}^2$, which corresponds to uniformly spaced layers perpendicular to the $n$ direction. However, boundary conditions can impose curvature on the layers and the resulting curvature is generally incompatible with equally spaced layers. Due to the intrinsic interplay between the layer
spacing, the Gaussian curvature, and the mean curvature, the problem of finding
minimal configurations for the energy $\left( \ref{smectics-freeenergy}\right) $ is
challenging. Over the years, physicists have devoted significant effort to looking
for exact or approximate solutions of deformations in smectic liquid
crystals \cite{AleKamSan12,AleCheMatKam10,AleKamMos12, BluKam02, BreMar99, DiK02, DiK03, IL99, KamLub99, KamSan06,
MatKamSan12, San06, SanKam03, SanKam05, SanKam07}. {\color{black}The goal of this article is to analyze smectics using tools from the mathematical theory for similar singularly perturbed variational problems, thus providing a link between these ideas and the extensive physics literature.}\par
In order to provide the necessary background for our analysis, let us briefly review the relevant work from this literature. To study deformations of the smectic layers, we fix coordinates by choosing $n=\hat{z}$ and introduce the Eulerian displacement field
$$ u(\bold{x}) = z - \Phi(\bold{x}).$$
Expanding the compression strain in powers of $%
\nabla u=\hat{z}-\nabla{\Phi} $ and defining $\nabla _{\bot }$ $=\dx\hat{x}+\dy\hat{y},$ we can
write the compression strain as
\begin{equation*}
1-\left\vert \nabla \Phi \right\vert \approx \partial _{z}u-\frac{1}{2}%
\left| \nabla _{\bot }u\right| ^{2}+\mathcal{O}\left( |\nabla u|^{3}\right) .
\end{equation*}%
In the limit of small elastic strains $\left\vert \nabla u\right\vert \ll 1,$
it is typical to retain only the terms quadratic in derivatives of $u$ in $%
\left( \ref{smectics-freeenergy}\right) $, yielding a linear theory to
describe elastic deformations in smectic liquid crystals \cite{deGP, Kle83}. While the linear
theory is rather successful in describing deformations for screw
dislocations and small angle twist grain boundaries, it misses much of the
essential physics for edge dislocations or large angle twist grain
boundaries. Due to the truncation at the level of quadratic terms in $\nabla u$, the linear theory for edge dislocations in smectics is only valid in
the limit $\frac{\left\vert \nabla _{\perp }u\right\vert ^{2}}{\left\vert
\partial _{z}u\right\vert }\ll 1$. This ratio is of order $b/\lambda $, where 
$b\in a\mathbf{Z}$ is the Burgers vector, and is not small for an edge dislocation \cite{SanKam03}. This was first observed by Brener and Marchenko \cite{BreMar99}, who demonstrated that for the case $b\sim \lambda ,$ nonlinear effects must be taken
into account to describe the asymptotic behavior even far from the defect
core where elastic strain is small. They found an exact solution to the Euler-Lagrange equation for the
following nonlinear approximation of $\left( \ref{smectics-freeenergy}%
\right) $ in two dimensions in the regime $\partial _{z}u\sim \left(
\partial _{x}u\right) ^{2}\ll 1$

\begin{equation}
F=\frac{B}{2} \int_{\Omega}\left[\left( \partial _{z}u-\frac{1}{2}\left( \partial
_{x}u\right)^2 \right) ^{2}+\lambda ^{2}\left( \partial _{x}^{2}u\right) ^{2}%
\right] dx\,dz.  \label{2dnonlinear}
\end{equation}%
Their solution differs significantly from the linear profile even far from
the defects where the elastic strain and layer curvature is small. In the limit of large
bending rigidity, Brener and Marchenko's solution recovers the profile from linear theory. Their construction was confirmed experimentally by Ishikawa and Lavrentovich 
\cite{IL99} in a cholesteric finger texture.\par
Brener and Marchenko's solution is a specific example of a special class of exact solutions for nonlinear approximations
of (\ref{smectics-freeenergy}) developed later by Santangelo and Kamien \cite%
{SanKam03}. They studied the 3D nonlinear approximation of $%
\left( \ref{smectics-freeenergy}\right) $%
\begin{equation}
F=\frac{B}{2} \int_{\Omega}\left[\left( \partial _{z}u-\frac{1}{2}\left| \nabla
_{\bot }u\right| ^{2}\right) ^{2}+\lambda ^{2}\left( \Delta _{\bot }u\right)
^{2}\right] dx, {\color{black}\label{3dnonlinear}}
\end{equation}%
where $\nabla _{\bot }u=\left( \partial _{x}u,\partial _{y}u\right) $ and $\Delta _{\bot }u=\partial
_{x}^{2}u_{{}}+\partial _{y}^{2}u_{{}}$ is the linear approximation of the
mean curvature $\mathbf{H=\nabla \cdot N.}$ Following a method developed by
Bogomol'nyi \cite{Bo76}, Prasad and Sommerfield \cite{PraSom76} (BPS
decomposition)\ in the study of field configurations of magnetic monopoles
and solitons in field theory, Santangelo and Kamien decomposed the total
energy $\left(\ref{3dnonlinear}\right) $ into the sum of a perfect square
and a total derivative plus an additional term $\int u\overline{K}$, where 
\begin{equation*}
\overline{K}=\frac{1}{2}\nabla _{\bot }\cdot \left( \nabla _{\bot }u\Delta
_{\bot }u-\frac{1}{2}\nabla _{\bot }\left\vert \nabla _{\bot }u\right\vert
^{2}\right)
\end{equation*}%
represents the approximation of Gaussian curvature in terms of the Eulerian
displacement $u$. For deformations with $\overline{K}=0$, the free energy
reduces to the sum of a perfect square plus a series of surface terms. The minimum is therefore achieved by BPS solutions where the perfect square term vanishes. The
BPS solutions satisfy a nonlinear differential equation of reduced order
which can be transformed into a linear equation through a Hopf-Cole
transformation. The energy of these configurations simplifies to a
topological term which can be evaluated on the layers near defect core. For
a deformation depending only on $z$ and $x,$ so that $\overline{K}=0$, the BPS
equation becomes 
\begin{equation}
\partial _{z}u-\frac{1}{2}\left( \partial _{x}u\right) ^{2}-\lambda \partial
_{x}^{2}u=0,  \label{bpsequation}
\end{equation}%
which recovers Brener and Marchenko's solution through the boundary constraint $u_{\pm }\left(
x,z=0\right) =\pm \frac{b}{2}\Theta \left( x\right) $, where $\Theta \left(
x\right) $ is the step function. For small $\overline{K},$ the BPS solutions
exhibit lower energy than profiles from the linear theory \cite{SanKam03}. The same
approach was generalized by Santangelo and Kamien \cite{SanKam05} to the full smectic energy \eqref{smectics-freeenergy} where they identified a special class of minima when
Gaussian curvature vanishes. In particular, their analysis showed that the layer
deformation in the full theory is very close to that from the partially
nonlinear theory studied in \cite{BreMar99} and \cite{SanKam03}.

While many physics papers focus on finding exact solutions or approximate solutions for nonlinear smectics, {\color{black}works on the mathematical analysis of similar models often focus on the asymptotic behavior of the energy as a small parameter such as $\lambda$ approaches zero. In particular, proving compactness and convergence to a limiting energy (in the sense of $\Gamma$-convergence) are natural questions in light of the fact that $\Gamma$-convergence and equicoercivity imply the convergence of minimizers to minimizers. Since many of our techniques draw from these ideas, it is perhaps instructive to recall a selection of results for a well-studied example, the Aviles-Giga functional in two dimensions. Aviles and Giga \cite{AviGig87} proposed the energy 
\begin{equation}\label{AGoriginal}
\mathcal{F}_{\varepsilon}=\int_{\Omega}\frac{1}{\varepsilon}(|\nabla u|^2-1)^2+\varepsilon(\Delta u)^2
\end{equation}
as a model of liquid crystals in the smectic state, where $\Omega \subset \mathbb{R}^2$ is a bounded domain. Observe this is similar to  \eqref{smectics-freeenergy} with $\Phi=u$ and the mean curvature $\nabla \cdot \frac{\nabla u}{|\nabla u|}$ replaced by its linear approximation $\Delta u$. Up to a boundary term, \eqref{AGoriginal} is equivalent to 
\begin{equation}\label{Aviles-Giga}
\mathcal{E}_{\varepsilon}=\int_{\Omega}\frac{1}{\varepsilon}(|\nabla u|^2-1)^2+\varepsilon|\nabla \nabla u|^2.
\end{equation}
%\eqref{Aviles-Giga} also appears as a simplified form of a model suggested by Gioia and Ortiz \cite {GioOrt97, OrtGio94}  in the context of blistering of compressed thin films. 
When $\varepsilon$ goes to zero, it is expected that minimizers of $\mathcal{E}_{\varepsilon}$ (subject to suitable boundary constraints) converge (in a suitable Sobolev space) to a limiting state $u_0$ which represents the smectic state and satisfies eikonal equation $|\nabla u|=1$ a.e. in $\Omega$. Due to boundary constraints, the solutions to eikonal equation are not smooth and the limiting energy concentrates on the discontinuities of $\nabla u$ (the folds).  The singular perturbation term $\varepsilon|\nabla \nabla u|^2$ provides a selection mechanism, choosing a special fold-energy minimizing solution of the eikonal equation. Aviles and Giga conjectured the fold-energy corresponds to a 1D ansatz at the $\e$ level and the limiting energy takes the form
\begin{equation*}
\mathcal{E}_0=1/3\int_{J_{\nabla u}}|[\nabla u]|^3,
\end{equation*}
where the limiting function $u$ satisfies eikonal equation $|\nabla u|=1$ a.e., $J_{\nabla u}$ is the defect set, and $[\nabla u]$ is the jump in  $\nabla u$ across $J_{\nabla u}$. The first significant progress toward Aviles-Giga's conjecture came in the work of Jin and Kohn \cite{JinKoh00}, where they developed a new scheme for proving lower bounds for the Aviles-Giga energy \eqref{Aviles-Giga} in two dimensions. Jin and Kohn  observed that the divergence of vector field
\begin{equation}\label{jkentropy}
\Sigma u=(u_1(1-u_2^2-\frac{1}{3}u_1^2),-u_2(1-u_1^2-\frac{1}{3}u_2^2)),      u_i=\partial_{i}u,    u_{ii}=\partial_{ii}u,
\end{equation}
namely $(1-|\nabla u|^2)(u_{11}-u_{22})$, can be used to bound $\mathcal{E}_{\varepsilon}$  from below. Under the specific choice of boundary conditions $u=0$, $\frac{\partial u}{\partial n}=-1$ on $\partial \Omega$,  they showed the lower bounds from $\dive \Sigma u$ are asymptotically sharp for certain domains, supporting Aviles-Giga's conjecture that the optimal transition layers are one dimensional. The picture in two dimensions was completed by Ignat and Monteil \cite{IgnMon20} where they proved any minimizer of \eqref{Aviles-Giga} on an infinite strip is one-dimensional.  Taking the supremum of the divergences of  all rotated variants of $\Sigma u$, Aviles and Giga \cite{AviGig99} obtained a limiting functional $J: W^{1,3}(\Omega)\rightarrow [0, \infty)$ which satisfies
$$
J(u)\leq \liminf_{n\rightarrow \infty} \mathcal{E}_{\varepsilon_n}(u_n)
$$ 
for any sequence $u_n$ converging to $u$ strongly in $W^{1,3}(\Omega)$.  Moreover, $J$ is lower semicontinuous with respect to strong convergence in $W^{1,3}(\Omega)$ and coincides with $\mathcal{E}_0$ for any $u$ satisfying eikonal equation with $\nabla u\in BV(\Omega)$. The matching upper bound when $\nabla u \in BV(\Omega)$ was shown in \cite{ConDeL07, Pol08}. %For dimension 3 or higher, however,  De Lellis \cite{deL02} constructed a couterexample, showing  that $\mathcal{E}_0$ is not the limiting energy and 1D ansatz is not optimal.
While the  result in \cite{AviGig99} suggests the natural function space for the limiting problem is 
$$
AG_e(\Omega)=\{u\in W^{1,3}(\Omega): |\nabla u|=1\text{  a.e. in }\Omega \text{ and } J(u)<\infty\},
$$
$\mathcal{E}_0$ is well defined  only if $\nabla u$ has locally bounded variation in $\Omega$. In fact, a counterexample was constructed in  \cite{AmbDeLMan99}  showing there is a function  in $AG_e(\Omega)$ which does not have locally bounded variation in $\Omega$. In the same paper, the authors  proved compactness of the sublevel sets 
$$
{u\in AG_e(\Omega): J(u) \leq M}
$$
for any constant $M>0$ and equicoercivity of $\mathcal{E}_{\varepsilon}$, i.e. any sequence $\{u_n\}$ with $\mathcal{E}_{\varepsilon_n}(u_n)$ bounded and $\varepsilon_n \downarrow 0$ has a subsequence converging to a limiting function $u\in AG_e(\Omega)$. 
By  a different argument,  the authors in \cite{DKMO01} proved that the gradients of a  sequence $\{u_{\varepsilon}\}$ with bounded energy $\mathcal{E}_{\varepsilon}$ as $\varepsilon$ goes to zero are compact in $L^2$.
}

{\color{black}Motivated by the physics literature on smectic liquid crystals and analysis tools developed in the study of Aviles-Giga problem}, we consider the
2D nonlinear approximations of (\ref{smectics-freeenergy}) studied by Brener
and Marchenko \cite{BreMar99}
\begin{equation*}
F\left( u\right) =\frac{1}{2}\int_{\Omega}\left[ B\left( \partial _{z}u-\frac{1}{2%
}\left( \partial _{x}u\right) ^{2}\right) ^{2}+K_{1}\left( \partial
_{x}^{2}u\right) ^{2}\right] dx.
\end{equation*}%
Setting $\e = \sqrt{K_1/B}$ and multiplying through by $(B\e)^{-1}$, we arrive at
\begin{equation}
E_{\varepsilon }\left( u\right) =\frac{1}{2}\int_{\Omega }\left[ \frac{1}{%
\varepsilon }\left( \partial _{z}u-\frac{1}{2}\left( \partial _{x}u\right)
^{2}\right) ^{2}+\varepsilon \left( \partial _{x}^{2}u\right) ^{2}\right]
dx\,dz,  \label{2dsingular}
\end{equation}%
where $\Omega \subset \mathbb{R}^{2}$ is a bounded region. We are interested in the asymptotic behavior of $\inf_u E_\e$ as $\varepsilon \rightarrow 0$, which corresponds
to the physical case where the intrinsic length scale (the bend penetration depth $\e = \lambda$) is vanishingly small compared to a length scale related to the problem geometry (size of $\Omega$).\par
Our main results are:
\vspace{.1cm}\begin{itemize}
\item a compactness theorem for a sequence with bounded energies (\Cref{2dcpt}),
\item a lower bound on $E_{\varepsilon }$ when $\varepsilon \rightarrow
0$ (\Cref{2dlbd}), and 
\item {\color{black}a sharp upper bound when $\nabla u \in BV(\Omega)\cap L^\infty(\Omega)$ (\Cref{2dupbd}).}
\end{itemize}\vspace{.1cm}
For $\varepsilon _{n}\rightarrow 0$ and a sequence $\left\{
u_{n}\right\} $ with bounded energies $E_{\varepsilon _{n}}\left(
u_{n}\right)$, we prove compactness of {\color{black}$\{\dx u_n\}$ in $L^q$ for any $1\leq q<p$ and compactness of $\{\dz u_n\}$  in $L^{2}$} under the additional assumption $\| \dx u_{n}\| _{L^{\color{black}p }}\leq
C $ {\color{black}  for some $p >6$}. This assumption is physically justifiable since the model \eqref{2dsingular} is only valid in the limit of small strains \cite{BreMar99,SanKam03}. {\color{black}From a mathematical perspective, some assumption is necessary for a compactness result due to the fact that the set $\{m_2 = m_1^2/2 \}\subset \mathbb{R}^2$ is unbounded. }%Furthermore, recall that when $\lambda \rightarrow 0$, the full model (\ref%{smectics-freeenergy}) implies $\left\vert \nabla \Phi \right\vert =1$ as $% \lambda \rightarrow 0.$ Thus from a mathematical perspective, since $u$ represents the deformation from flat layers with normals in $\hat{z}$ direction, the additional assumption $% \left\vert \nabla u_{n}\right\vert _{L^{p }}\leq C$ is reasonable.
Our compactness proof uses an entropy argument following the work of Tartar 
\cite{Tar79, Tar83, Tar05} and Murat \cite{Mur78, Mur81ASNP, Mur81JMPA} on compensated compactness.

For the lower bound, by applying the BPS decomposition to $\left( \ref{2dsingular}\right) ,$ we can
write $E_{\varepsilon }(u)$ as 
\begin{equation}\label{decomp}
E_\e(u) = \int_\Omega\dive \Sigma \left( \nabla u\right)\,dx \,dz + \frac{1}{2}\int_{\Omega }\frac{1}{\varepsilon }\left( \partial _{z}u-%
\frac{1}{2}\left( \partial _{x}u\right) ^{2}-\varepsilon \partial
_{x}^{2}u\right) ^{2}\,dx\,dz,
\end{equation}
cf. \eqref{bps}. It then follows that $E_{\varepsilon }$ is always bounded from below by the integral of the total derivative and is saturated when the perfect square term vanishes. However, at this point, we do not search for solutions of \eqref{bpsequation} so that the second term in \eqref{decomp} vanishes. Instead, for a sequence $\varepsilon _{n}\rightarrow 0$ and $\left\{ u_{n}\right\} $ in $ H^{1}$ converging to a limiting function $u$ in a suitable space, we use \eqref{decomp} to bound $%
\lim \inf E_{\varepsilon _{n}}\left( u_{n}\right) $ from below by the total
jump in $\Sigma \left( \nabla u\right) $ of the limit function $u$ across
the jump set, explicitly written as 
\begin{equation}\notag
\int_{J_{\nabla u}} \frac{|\dx u^+ - \dx u^-|^3}{12\sqrt{1+\frac{1}{4}(\dx u^+ +\dx u^-)^2}}d\mathcal{H}^1,
\end{equation}
cf. \eqref{lscineq}. The argument is {\color{black}strongly} reminiscent of the Jin-Kohn argument mentioned above \cite{JinKoh00}. Next, for the matching upper bound, by a general theorem of Poliakovsky \cite{Pol08}, it suffices to show the localized problem on a square is
asymptotically minimized by a 1D ansatz when $\varepsilon \rightarrow 0$.
Our 1D ansatz satisfies the BPS equation so that the perfect square term
vanishes and matches the lower bound asymptotically. {\color{black} Regarding the 3D model \eqref{3dnonlinear}, however, the BPS decomposition yields an additional term $\int u\overline{K}$ in the sum, and one can only get a lower bound when the approximation of the Gaussian curvature $\overline{K}$ vanishes. In the physics literature, this restriction has been noted in \cite{SanKam03, SanKam05}. To obtain the lower bound in the general case, a different argument is needed instead of the BPS decomposition. See \cite{NY2} for further details.} \par
Finally, we remark on the implications of the analysis here. Our work indicates that for the 2D smectic model \eqref{2dnonlinear}, the local defect energy of asymptotically minimal configurations corresponds to the energy of a 1D ansatz in which $\nabla u$ varies in the direction transverse to the defect. {\color{black}A full $\Gamma$-convergence proof would entail the construction of a recovery sequence $\{ \nabla u_\e\}$ when $\nabla u \notin BV(\Omega)\cap L^\infty(\Omega)$, which presents non-trivial technical issues, cf. \Cref{lscremark}.} The optimality/non-optimality of 1D transition regions and the possible emergence of microstructure is a recurring theme in problems coming from materials science; see for example \cite{Koh07}. It is interesting that such microstructure does not appear in this smectics model. Also, the sharp lower bound using the BPS decomposition is physically compelling, in that it  shows that minimization of the total energy occurs via an equipartition of energy between the bending and compression terms. Furthermore, it has been observed in the physics literature \cite{SanKam03} that the dependence of the BPS solution on the problem geometry makes it difficult to use the solution to gain insight into more complicated defect structures, e.g. curved deformations or multiple edge dislocations. Our analysis demonstrates that regardless of geometry, this equipartition of energy is optimal.\par

The paper is organized as follows. After a brief review on preliminaries in \Cref{sec:prelim}, we prove compactness in \Cref{sec:cpt}.
\Cref{sec:lbd} is devoted to a lower bound with a key lemma proved  in Appendix.
In \Cref{sec:upbd} we construct a 1D ansatz in a square which matches
the lower bound from \Cref{sec:lbd} when $\varepsilon \rightarrow 0$.

\section{Preliminaries}
\label{sec:prelim}
We consider the energies
\begin{equation}\label{2denergy}
    E_\e(u) =\frac{1}{2} \int_\Omega\left\{ \displaystyle\frac{\left(\dz u - \frac{1}{2}(\dx u)^2\right)^2}{\e} + \e (\dxx u)^2\right\}\,dx\,dz.
\end{equation}
Throughout the paper $\Omega \subset \mathbb{R}^2$ will be a bounded domain. In some parts, we will require mild regularity on $\partial \Omega$ which will be specified. {\color{black}We define the admissible class by }
\begin{equation}\notag\color{black}
\mathcal{A}:= \{ u \in H^1(\Omega):\dx^2 u \in L^2(\Omega) \}.
\end{equation}
%the subset of $W^{1,\infty}(\Omega)$ with uniformly bounded gradients and with distributional second partial derivative $\dx^2 u$ given by an $L^2$ function. \par
{\color{black}Regarding the choice of $\mathcal{A}$, the condition $u \in H^1(\Omega)$ is not sufficient for the integral $\int (\dz u - (\dx u )^2/2)^2$ to always be finite, so we will take the convention that $E_\e(u)=\infty$ if that term is infinite.\par%The assumption that $\| \nabla u\|_{L^{\color{black}p}}\leq C$ is not necessary for the energy $E_\e(u)$ to be well-defined. However, the uniform $L^{\color{black}p}$-bound can be interpreted as ensuring the physical relevance of the model, which is valid only in the limit of small strains \cite{BreMar99}. This type of assumption will also play a role in the compactness theorem, \Cref{2dcpt}. For some proofs it will only be necessary that $\| \dx u \|_{L^{\color{black}p}}\leq C$. Thus for the sake of precision we will phrase the theorems with this weaker condition when feasible. 
With the goal of understanding what type of boundary conditions pertain to the class $\mathcal{A}$, let $\partial \Omega$ be Lipschitz. Since $\mathcal{A}\subset H^{1}(\Omega)$, we can demand
\begin{equation}\notag
u_{\partial\Omega} = g 
,\end{equation}
where $u\in \mathcal{A}$ and $g: \partial \Omega \to \mathbb{R}$ is {\color{black}in $H^{1/2}(\partial \Omega)$}. Regarding $\nabla u$, there is not enough regularity for a Dirichlet condition. However, it is possible to require that admissible competitors for $E_\e$ satisfy a condition of the type
\begin{equation}\label{bc}
\nabla u_{\partial \Omega} \cdot \tau_\Omega = h, \quad h \in H^{-1/2}(\partial \Omega)
.\end{equation}}
Here $\tau_\Omega$ is tangent to $\partial \Omega$. This is due to the fact that $\nabla u$ belongs to the space
$$H_{\curl }(\Omega;\mathbb{R}^2)=\{m \in L^2(\Omega;\mathbb{R}^2): \curl m = \dx m_2 -  \dz m_1 \in L^2(\Omega)\}.$$
In fact, $\curl(\nabla u) = 0$ in the sense of distributions. Since $\nabla u \in H_{\curl }(\Omega;\mathbb{R}^2)$, there is, in the sense of distributions, a well-defined tangential trace $\nabla u_{\partial \Omega} \cdot \tau_\Omega\in H^{-1/2}(\partial \Omega)$, cf. \cite[Ch. 1]{Tem79}. For $\phi \in H^{1/2}(\partial \Omega)$, $\nabla u_{\partial \Omega} \cdot \tau_\Omega$ acts via the integration by parts formula
\begin{align}\label{ibpform}
< \nabla u_{\partial \Omega} \cdot \tau_\Omega, \phi > &:= \int_\Omega  \left(\Phi\, \curl (\nabla u) + \nabla^\perp \Phi \cdot \nabla u \right) \, dx \,dz \\ \notag
&= \int_\Omega   \nabla^\perp \Phi \cdot \nabla u  \, dx \,dz,
\end{align}
where $\Phi$ is an $H^1(\Omega)$ extension of $\phi$ and $\nabla^\perp \Phi = (-\dz \Phi, \dx \Phi)$. The first term in the integrand in \eqref{ibpform} vanishes since $\curl(\nabla u) = 0$. This definition is independent of the choice of $\Phi$, as can be seen by an approximation argument.\par

{\color{black} For the upper and lower bounds, we work with a different set of limiting configurations $\mathcal{A}_0\subset W^{1,\infty}(\Omega)$ (defined in \Cref{sec:lbd}).} Now when $u\in \mathcal{A}_0$, so that $\nabla u \in L^\infty(\Omega;\mathbb{R}^2)$, it turns out that the distribution $\nabla u_{\partial \Omega} \cdot \tau_\Omega$ corresponds to an $L^\infty(\partial\Omega)$-function. This is a consequence of the duality $L^\infty(\partial\Omega) = (L^1(\partial \Omega))^*$. Indeed, we fix $\phi \in L^1(\partial \Omega)$ and consider a $W^{1,1}(\Omega)$ extension $\Phi$. The right hand side of \eqref{ibpform} defines a functional which is independent of the particular $\Phi$ and thus clearly linear in $\phi$. Furthermore, it is continuous due to H{\"o}lder's inequality. \par
Lastly, we remark on the question of existence of minimizers for $E_\e$. Due to the lack of control on second derivatives of $u$ other than $\dxx u$,  it is not clear how to use the direct method to find a minimizer of $E_\e$. 
%This is a consequence of the fact that $E_\e$ is an approximation of the full model \eqref{smectics-freeenergy} in which certain terms were ignored. 
\color{black} 
However, since we are interested in characterizing states with low or near-minimal energy and our compactness theorem requires only bounded energies, this issue is not a significant obstacle.

\section{Compactness}
\label{sec:cpt}
Our main result in this section is 
\begin{theorem}\label{2dcpt}
Let $\Omega \subset \R^2$ be a bounded domain. Let $\e_n \searrow 0$, $\{u_n\}\subset H^1(\Omega)$ be such that 
\begin{equation}\notag
   \|\dx u_n \|_{L^{\color{black}p}}\leq C {\color{black} \text { for some }p >6},\quad \dx^2 u_n \in L^2(\Omega),\quad \textit{and} \quad E_{\e_n}(u_n)\leq C.
\end{equation}
Then
{\color{black}
\begin{equation}\notag%\color{black}
    \{\dx u_n \} \textit{ is relatively compact in } L^q(\Omega) \text{ for any } 1\leq q<p
\end{equation}
and
\begin{equation}\notag
\{\dz u_n \} \text{ is relatively compact in } L^2(\Omega)  .
\end{equation}}
\end{theorem}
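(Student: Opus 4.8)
The plan is to recognize that $w_n := \dx u_n$ is an approximate solution of a genuinely nonlinear scalar conservation law and then run the Tartar--Murat compensated compactness program. Write $a_n := \dz u_n - \tfrac12(\dx u_n)^2$, so that the energy bound gives $\|a_n\|_{L^2}^2 \le 2C\e_n \to 0$ together with $\e_n\|\dxx u_n\|_{L^2}^2 \le 2C$. Since $\curl \nabla u_n = 0$, the identity $\dx(\tfrac12 w_n^2 + a_n) = \dz w_n$ shows that
\begin{equation}\notag
\dz w_n + \dx\!\left(-\tfrac12 w_n^2\right) = \dx a_n,
\end{equation}
i.e.\ $w_n$ solves the conservation law with flux $f(w) = -\tfrac12 w^2$ up to the source $\dx a_n$, which is small because $a_n \to 0$ in $L^2$. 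The flux is genuinely nonlinear ($f'' \equiv -1$), which is precisely the structural property that powers the compensated compactness reduction.

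First I would establish that for the relevant entropy--entropy-flux pairs $(\eta,q)$ (those with $q' = -w\eta'$) the entropy production lies in a compact subset of $H^{-1}_{loc}(\Omega)$. Differentiating and using the conservation law gives
\begin{equation}\notag
\dz \eta(w_n) + \dx q(w_n) = \eta'(w_n)\,\dx a_n = \dx\!\big(\eta'(w_n)\,a_n\big) - \eta''(w_n)\,(\dxx u_n)\,a_n.
\end{equation}
For entropies with bounded derivative the first term tends to $0$ in $H^{-1}$, since then $\eta'(w_n)a_n \to 0$ in $L^2$. The essential point is the second term: although $\dxx u_n$ only obeys $\|\dxx u_n\|_{L^2}\sim \e_n^{-1/2}$, the factor $a_n$ carries the compensating smallness $\|a_n\|_{L^2}\sim \e_n^{1/2}$, so $\eta''(w_n)(\dxx u_n)a_n$ is bounded in $L^1(\Omega)$, hence in the space of measures. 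Together with an a priori bound on the production in $W^{-1,r}$ for some $r>2$, Murat's lemma upgrades this decomposition (vanishing part in $H^{-1}$ plus measure-bounded part) to precompactness in $H^{-1}_{loc}$. The hypothesis $p>6$ enters here and in the div--curl step below: it guarantees that entropy fluxes of cubic growth, $q(w)\sim w^3$ (produced by the quadratic entropies natural for a genuinely nonlinear flux), are square-integrable, as required to place the associated fields in $L^2$ and to obtain the needed $W^{-1,r}$ bounds with $r>2$.

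With $H^{-1}$ precompactness of the entropy productions in hand, I would pass to the Young measure $\nu_{x,z}$ generated by a subsequence of $w_n$. Because $a_n \to 0$ in $L^2$, the measure is carried by $\R$ and has finite $p$-th moment; the $L^p$ bound prevents escape of mass to infinity along the unbounded constraint parabola $\{m_2 = m_1^2/2\}$. The div--curl lemma applied to pairs of entropy fields then yields the Tartar commutation relation for $\nu_{x,z}$, and genuine nonlinearity of $f$ forces $\nu_{x,z} = \delta_{w(x,z)}$ for a.e.\ $(x,z)$, which is equivalent to $w_n \to w$ in measure. Since $\{w_n\}$ is bounded in $L^p$, convergence in measure improves by Vitali's theorem to $w_n \to w$ in $L^q$ for every $q<p$, giving the first assertion.

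Finally, for the second assertion I would use $\dz u_n = \tfrac12 w_n^2 + a_n$. Choosing any $q\in(4,p)$, the $L^q$ convergence of $w_n$ yields $w_n^2 \to w^2$ in $L^2$ (by H\"older, as $w_n,w$ are bounded in $L^4$), while $a_n \to 0$ in $L^2$; hence $\dz u_n \to \tfrac12 w^2$ in $L^2$ and $\{\dz u_n\}$ is relatively compact in $L^2(\Omega)$. I expect the main obstacle to be the rigorous implementation of the two compensated-compactness steps in this unbounded setting: verifying the $H^{-1}_{loc}$ precompactness of entropy productions for a sufficiently rich class of entropies (balancing the $\e_n$-scaling of $\dxx u_n$ against the smallness of $a_n$ and invoking Murat's lemma with the exponents supplied by $p>6$), and carrying out the reduction of the Young measure to a Dirac mass when the generating sequence is controlled only in $L^p$ rather than $L^\infty$, so that one must simultaneously rule out concentration of mass at infinity along the parabola.
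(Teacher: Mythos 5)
Your proposal is correct in substance, and its first half is exactly the paper's argument: the paper also rewrites the divergence-free condition as $\partial_z w_n + \partial_x(-\tfrac12 w_n^2) = \partial_x a_n$ with $a_n = \partial_z u_n - \tfrac12(\partial_x u_n)^2 \to 0$ in $L^2$, multiplies by $-w_n$ to produce the entropy pair $(f,F)$ with $f(w)=-\tfrac12 w^2$, $F(w)=\tfrac13 w^3$, and uses precisely your two-term decomposition: a term $\partial_x\bigl(w_n\bigl(a_n\bigr)\bigr)$ vanishing in $W^{-1,2p/(2+p)}$ and a term $\partial_x w_n\, a_n$ bounded in $L^1$ via the energy balance $\|\partial_x^2 u_n\|_{L^2}\|a_n\|_{L^2}\le C$, followed by Murat's interpolation lemma using the $W^{-1,p/3}$ bound, where $p/3>2$ is exactly where $p>6$ enters (matching your diagnosis that cubic fluxes must land above exponent $2$). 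Where you diverge is the endgame. You invoke the full Young-measure program (Tartar commutation relation for a rich family of entropies, reduction to a Dirac mass via genuine nonlinearity), which in the merely-$L^p$ setting requires extending the classical $L^\infty$ theory and ruling out concentration at infinity---the very obstacle you flag as the main difficulty. The paper sidesteps all of this with Tartar's single-entropy trick: it applies the div--curl lemma once, to $\Phi_n=(w_n-\overline{w},\, f(w_n)-f(\overline{w}))$ and $\Psi_n=(F(w_n)-F(\overline{w}),\, -(f(w_n)-f(\overline{w})))$, and combines the resulting weak-$\ast$ limit $-(\overline{f}-f(\overline{w}))^2\le 0$ with the pointwise Cauchy--Schwarz inequality $(f(w_n)-f(\overline{w}))^2 \le (w_n-\overline{w})(F(w_n)-F(\overline{w}))$, valid because $F'=(f')^2$. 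This forces $\overline{f}=f(\overline{w})$, i.e.\ $w_n^2 \rightharpoonup \overline{w}^2$, hence $\|w_n\|_{L^2}\to\|\overline{w}\|_{L^2}$ and strong $L^2$ convergence---no Young measures, no genuine-nonlinearity reduction, and no unbounded-support issues. From there both arguments finish identically: interpolation against the $L^p$ bound gives $L^q$ convergence for $q<p$, and $\partial_z u_n = \tfrac12 w_n^2 + a_n$ gives the $L^2$ compactness of $\{\partial_z u_n\}$ (your restriction to $q\in(4,p)$ is harmless; $L^4$ convergence of $w_n$ suffices). One minor omission in your write-up: differentiating $\eta(w_n)$ and $q(w_n)$ presupposes smoothness, and the paper justifies this with a preliminary approximation of the fields by smooth ones preserving the hypotheses---worth a sentence in a complete proof. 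In short, your plan is workable, but its hard technical step is avoidable, and the paper's single-entropy argument is the streamlined realization of it.
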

%\begin{remark}\label{bcpersist}
%We point out that from the energy bound, any $L^2$-limit $\nabla u$ of a subsequence of $\{ \nabla u_n \}$ satisfies $\dz u = (\dx u)^2/2$ and is also in the space $H_\curl(\Omega;\mathbb{R}^2)$. Furthermore, it must be the case that $\nabla u \in L^{\infty}(\Omega;\mathbb{R}^2)$, as explained in \Cref{remark1}. Therefore, if $\partial \Omega$ is Lipschitz, we can define an $L^\infty(\partial \Omega)$ tangential trace for $\nabla u$ as in the previous section. In addition, by a quick inspection of \eqref{ibpform}, such a boundary condition is preserved under the $L^2$-convergence of $\nabla u_n$ when $u_n \in \mathcal{A}$.
%\end{remark}\par
\Cref{2dcpt} is a direct corollary of the following stronger proposition. To state and prove the proposition, we work with the divergence-free vector fields $\mn=(m_{n1},m_{n2})=\ro \nabla u_n$, where 
%$\mathrm{R}$ denotes the rotation
$$
\ro\nabla u_n = \begin{pmatrix}
\dx u_n \\
-\dz u_n 
\end{pmatrix}.
$$
\begin{proposition}\label{cptprop}
Let $\Omega \subset \R^2$ be a bounded domain. Let $\{ \mn \}\subset L^2(\Omega;\R^2)$ be such that 
\begin{equation}\label{divfree}
    \dive \mn = {\color{black} m_{n1z}+m_{n2x}}=0 \quad \textit{ in the sense of distributions}
,\end{equation}
\begin{equation}\label{potbd}
    \left\|m_{n2}+\tfrac{1}{2}m_{n1}^2 \right\|_{L^2} \to 0
,\end{equation}
\begin{equation}\label{enbd}
 \dx m_{n1}\in L^2(\Omega) \textit{ with }   \|\dx m_{n1} \|_{L^2}\left\| m_{n2}+\tfrac{1}{2}m_{n1}^2\right\|_{L^2}\leq C
,\end{equation}
\begin{equation}\label{lpbound}
  \textit{and}\quad  \|m_{n1} \|_{L^{\color{black}p}} \leq C {\color{black}\text{ for some } p>6.}
\end{equation}
Then{\color{black}

\begin{equation}\notag
\{m_{n1} \} \textit{ is relatively compact in }L^q(\Omega) \text{ for any } 1\leq q<p.
\end{equation}
and 
\begin{equation}\notag%\color{black}
    \{m_{n2} \} \text{is relatively compact in } L^2(\Omega) 
\end{equation}}

\end{proposition}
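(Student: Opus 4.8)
The plan is to recognize the two constraints \eqref{divfree}--\eqref{potbd} as a scalar conservation law of Burgers type and then run Tartar's compensated compactness scheme, the new features being a nonzero (but $H^{-1}$-small) source and the replacement of an $L^\infty$ bound by the $L^p$ bound \eqref{lpbound}. First I would set $v_n := m_{n1}$ and $\sigma_n := m_{n2}+\tfrac12 m_{n1}^2$, so that $\sigma_n \to 0$ in $L^2$ by \eqref{potbd}. Substituting $m_{n2}=\sigma_n-\tfrac12 v_n^2$ into \eqref{divfree} gives
\[
\dz v_n + \dx\!\big(-\tfrac12 v_n^2\big) = -\dx \sigma_n,
\]
a scalar conservation law with genuinely nonlinear flux $f(s)=-s^2/2$ and source $-\dx\sigma_n$ small in $H^{-1}$.

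Next I would estimate the entropy productions. For a smooth entropy $\eta$ with conjugate flux $q$ determined by $q'(s)=-s\,\eta'(s)$, a direct computation gives
\[
\dz \eta(v_n) + \dx q(v_n) = -\eta'(v_n)\dx \sigma_n = -\dx\!\big(\eta'(v_n)\sigma_n\big) + \eta''(v_n)\,\sigma_n\,\dx v_n.
\]
I would restrict to \emph{quadratic} entropies $\eta$, so that $\eta''$ is bounded. Then the first term tends to $0$ strongly in some $W^{-1,r}$, $r>1$, since $\eta'(v_n)$ is bounded in $L^p$ and $\sigma_n\to 0$ in $L^2$ force $\eta'(v_n)\sigma_n\to 0$ in $L^r$; the second term is bounded in $L^1(\Omega)$, hence in the space of measures $\mathcal{M}(\Omega)$, precisely by the product bound \eqref{enbd}. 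The key point is that for quadratic $\eta$ the flux $q$ is cubic, so $\eta(v_n)$ and $q(v_n)$ are bounded in $L^{p/2}$ and $L^{p/3}$; because $p>6$, both exponents exceed $2$, and therefore the production is bounded in $W^{-1,p/3}$ with $p/3>2$. Combining the $\mathcal{M}$-bounded piece (precompact in $W^{-1,q_0}$ for $q_0<2$ in two dimensions) and the $W^{-1,r}$-vanishing piece with this higher-integrability bound, Murat's lemma upgrades the entropy production to be precompact in $H^{-1}_{\mathrm{loc}}(\Omega)$.

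With this in hand, each pair $(\eta(v_n),q(v_n))$ lies in $L^2_{\mathrm{loc}}(\Omega;\R^2)$ (again using $p>6$ so that $q(v_n)\in L^{p/3}\subset L^2_{\mathrm{loc}}$) and has $H^{-1}_{\mathrm{loc}}$-precompact production, so the div-curl lemma applies to any two such pairs and yields Tartar's commutation relation for the Young measure $\nu_{x,z}$ generated by a subsequence of $\{v_n\}$. I would apply it to $(\eta_1,q_1)=(s,-s^2/2)$ and $(\eta_2,q_2)=(s^2/2,-s^3/3)$, which produces
\[
\langle \nu, s^4\rangle = 4\,\langle \nu, s\rangle\,\langle \nu, s^3\rangle - 3\,\langle \nu, s^2\rangle^2
\]
(the degree-four product is equi-integrable since $v_n^4$ is bounded in $L^{p/4}$, $p/4>1$). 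Rewriting this in central moments about the mean $\mu=\langle\nu,s\rangle$ collapses it to $\langle \nu,(s-\mu)^4\rangle = -3\,\langle\nu,(s-\mu)^2\rangle^2$; as the left side is nonnegative and the right nonpositive, both vanish, so $\nu_{x,z}$ has zero variance and is a Dirac mass $\delta_{v(x,z)}$ for a.e.\ $(x,z)$. Hence $m_{n1}=v_n\to v$ in measure, and the uniform bound \eqref{lpbound} supplies the equi-integrability to upgrade this, via Vitali, to strong convergence $m_{n1}\to v=:m_1$ in $L^q(\Omega)$ for every $1\le q<p$.

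Finally, for $m_{n2}$: since $p>6>4$ I may fix $q\in(4,p)$, so $m_{n1}\to m_1$ in $L^4$, whence $m_{n1}^2\to m_1^2$ in $L^2$ by Cauchy--Schwarz; combined with $\sigma_n\to 0$ in $L^2$ this gives $m_{n2}=\sigma_n-\tfrac12 m_{n1}^2\to -\tfrac12 m_1^2$ in $L^2(\Omega)$. I expect the main obstacle to be the second step: because only the $L^p$ bound and the weak product bound \eqref{enbd} are available (rather than an $L^\infty$ bound), the entropy productions are a priori only bounded measures, and it is the interplay between the cubic growth of $q$ --- controlled exactly by the threshold $p>6$ --- and Murat's lemma that recovers the $H^{-1}_{\mathrm{loc}}$-precompactness that the div-curl lemma demands.
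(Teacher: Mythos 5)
Your proposal is correct, and its skeleton is the same as the paper's: the identical reformulation of \eqref{divfree}--\eqref{potbd} as a Burgers-type law $\dz v_n + \dx(-\tfrac12 v_n^2) = -\dx\sigma_n$ with $H^{-1}$-small source, the same two entropy--flux pairs (the paper works with $(v_n, f(v_n))$ and $(f(v_n),F(v_n))$, $f(s)=-s^2/2$, $F(s)=s^3/3$, which are your $(\eta_1,q_1)$ and $(\eta_2,q_2)$ up to sign), the same splitting of the quadratic-entropy production into a piece vanishing strongly in $W^{-1,2p/(p+2)}$ and a piece bounded in $L^1$ precisely by \eqref{enbd}, and the same use of the $W^{-1,p/3}$ bound with $p/3>2$ together with Murat's interpolation lemma and the div-curl lemma --- so the threshold $p>6$ enters exactly where you say it does. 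The genuine difference is the endgame. The paper never introduces Young measures: it applies div-curl to $v_n-\overline v$ and its fluxes to get the sign $\limsup\,[(v_n-\overline v)(F(v_n)-F(\overline v))-(f(v_n)-f(\overline v))^2]\le 0$, and plays this against the pointwise Cauchy--Schwarz inequality coming from $F'=(f')^2$, which gives the reverse sign; this forces $\overline f = f(\overline v)$, hence $v_n^2\rightharpoonup \overline v^2$, hence norm convergence $\|v_n\|_{L^2}\to\|\overline v\|_{L^2}$, upgrading weak to strong $L^2$ convergence (Tartar's single-entropy trick). You instead phrase the commutation relation on the Young measure and extract the fourth-moment identity $\langle\nu,s^4\rangle = 4\langle\nu,s\rangle\langle\nu,s^3\rangle - 3\langle\nu,s^2\rangle^2$, whose central-moment form $\mu_4=-3\mu_2^2$ forces a Dirac mass; your algebra checks out, and the equi-integrability of $v_n,\dots,v_n^4$ needed to represent all the limits by $\nu$ is indeed supplied by $p>6$. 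The two finishes are equivalent in strength: yours is the more systematic conservation-law formulation (and would accommodate further entropies), while the paper's is leaner and avoids Young-measure machinery altogether. One housekeeping point you should add: your ``direct computation'' of the entropy production (in particular multiplying the equation by $v_n$ for $\eta_2$) uses a chain rule that $v_n$ does not a priori support, since $\dz v_n$ is only distributional; the paper dispatches this with an initial reduction to smooth $m_n$ by approximation, and your write-up needs the same preliminary step.
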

The proof of \Cref{cptprop} utilizes the compensated compactness approach of Tartar \cite{Tar79,Tar83} and Murat \cite{Mur81ASNP}.  {\color{black} Recall for a scalar conservation law 
\begin{equation*}
\partial_zs+\partial_xf(s)=0 ,
\end{equation*}
where $f$ is a $C^1$ function, an entropy solution $s$ is defined such that for any nonlinear pair $(\eta, q)$ satisfying $\eta'=q'f'$ (the so-called entropy entropy-flux pair) satisfies 
\begin{equation*}
\partial_z\eta(s)+\partial_xq(s) \text{ is a measure }.
\end{equation*}
A lemma of Murat \cite{Mur78} implies ${\partial_z\eta(s_n)+\partial_xq(s_n) }$ is compact in $H^{-1}$ for a uniformly bounded sequence of entropy solutions $\{s_n\}$. This allows application of Tartar and Murat's div-curl lemma \cite{Mur78, Tar79} to derive restrictions on the Young measure generated by $\{s_n\}$, yielding strong convergence of $s_n$. This type of argument was also used in \cite{DKMO01}.  For our case, the limiting function should satisfy
\begin{equation*}
u_z=\frac{1}{2}u_x^2 \text{ a.e. in } \Omega.
\end{equation*}
Taking into account of this, we arrive at a scalar conservation law
\begin{equation*}
\partial_zs+\partial_x f(s)=0 \text{  with  }f(s)=-\frac{1}{2}s^2.
\end{equation*}
As observed  by Tartar \cite{Tar87} in the case of 1D conservation laws,  the $H^{-1}$-compact entropy production of a single entropy can be used to obtain strong convergence. We shall follow the same idea to prove compactness in our case by choosing a suitable entropy $\eta$.
}
\begin{proof}
First, by approximation, if the proposition holds for a sequence of smooth functions $\{ m_n \}$, then it holds for a general sequence as in the statement. To see that this is the case, it suffices to approximate a general sequence $\{ m_n\}$ by a sequence of smooth functions such that \eqref{divfree}-\eqref{lpbound} still hold and $m_n$ converge in $L^2(\Omega;\mathbb{R}^2)$ if and only if their approximants do. Therefore, in the rest of the proof, we will assume that each $m_n$ is smooth, so as to allow for differentiation of various expressions.\par
Let $v_n={\color{black}m_{n1}}$ and $f(v_n)=-\frac{1}{2}v_n^2$. We can write the divergence free condition \eqref{divfree} in terms of $v_n$ as
\begin{equation}\label{rewrite}
    \dz v_n + \dx f(v_n) =- {\color{black}\dx\left[m_{n2}+ \tfrac{1}{2}m_{n1}^2\right]}.
\end{equation}
Since ${\color{black}\left\|m_{n2}+\tfrac{1}{2}m_{n1}^2 \right\|_{L^2} \to 0}$, the right hand side of \eqref{rewrite} is precompact in $H^{-1}(\Omega)$.\par
Next, setting $F(v_n) =v_n^3/3$ and multiplying \eqref{rewrite} by $-v_n=-m_{n1}$, we obtain
\begin{align}\label{i12}
    \dz f(v_n) + \dx F(v_n) &= {\color{black}m_{n1}\dx\left[m_{n2}+\tfrac{1}{2}m_{n1}^2 \right]}\\ \notag
    &= {\color{black}\dx\left[m_{n1}\left(m_{n2}+\tfrac{1}{2}m_{n1}^2 \right) \right] - \dx m_{n1}\left(m_{n2}+\tfrac{1}{2}m_{n1}^2\right)}\\ \notag
    &=: I_1^n+I_2^n. \notag
\end{align}
From the uniform bound \eqref{lpbound} on $m_{n2}$ and \eqref{potbd}, we see that
\begin{equation*}
  { \color{black} \left\|m_{n2}\left(m_{n2}+\tfrac{1}{2}m_{n1}^2 \right) \right\|_{L^{\color{black}q}}\to 0} {\color{black}\text{ for } q=\frac{2p}{2+p}}
\end{equation*}
Therefore, the sequence $\{I^n_1\}$ is precompact in $W^{-1,q}(\Omega)$. Moving on to $I_2^n$, we use \eqref{enbd} in order to estimate 
\begin{equation}\notag
    \|I_2^n \|_{L^1} \leq {\color{black}\| \dx m_{n1} \|_{L^2}\left\|m_{n2}+\tfrac{1}{2}m_{n1}^2 \right\|_{L^2}} \leq C.
\end{equation}
It follows that $\{I_1^n+I_2^n \}$ is precompact in $W^{-1,q}(\Omega)$ for $\color{black}q=2p/(2+p)<2$. On the other hand, from \eqref{lpbound} and the definition of $v_n$, the left hand side of \eqref{i12} is bounded in $W^{-1,{\color{black}\frac{p}{3}}}(\Omega)$. By interpolation, we conclude that $\dz f(v_n) +\dx F(v_n)$ is precompact in $H^{-1}(\Omega)$.\par
%or appealing to weak compactness criteria such as \cite[Corollary 1.3.1]{Eva90}, we conclude that $\dz f(v_n) +\dx F(v_n)$ is precompact in $H^{-1}(\Omega)$.\par
Now, by \eqref{lpbound}, after restricting to a subsequence, we can assume that there exists $\overline{v} {\color{black}\in L^p(\Omega)}$, $\overline{f}{\color{black}\in L^{p/2}(\Omega)}$, and $\overline{F}\in L^{\color{black}p/3}(\Omega)$ such that
\begin{equation}\notag
v_n \rightharpoonup \overline{v},f(v_n)\rightharpoonup \overline{f}, \textup{ and } F(v_n)\rightharpoonup \overline{F}
\end{equation}
{\color{black}weakly  in $L^2(\Omega)$}. Replacing $v_n$ in the above arguments by $v_n - \overline{v}$, the results of the preceding two paragraphs immediately yield
\begin{equation}\notag
\dz[v_n-\overline{v}]+\dx[f(v_n) - f(\overline{v})]\textup{ is precompact in }H^{-1}(\Omega)
\end{equation}
and
\begin{equation}\notag
    \dz[f(v_n) -f(\overline{v})]+\dx[F(v_n) - F(\overline{v})]\textup{ is precompact in }H^{-1}(\Omega).
\end{equation}
These observations allow us to apply the div-curl lemma \cite{Mur78,Tar79} to
\begin{equation}\notag
\Phi_n=\begin{pmatrix}
v_n-\overline{v} \\
f(v_n) - f(\overline{v}) 
\end{pmatrix}, \Psi_n = \begin{pmatrix}
F(v_n)-F(\overline{v}) \\
-(f(v_n)-f(\overline{v})) 
\end{pmatrix},
\end{equation}
so that {\color{black}$$\Phi_n \cdot \Psi_n {\stackrel{\ast}\rightharpoonup} (\textup{weak} \lim \Phi_n)\cdot (\textup{weak} \lim \Psi_n) \textup{ in } \mathcal{M}(\Omega) \textup{ weak} \ast.$$} Written explicitly, this reads
\begin{align}\label{less0}
    (v_n-\overline{v})&(F(v_n)-F(\overline{v})) - (f(v_n)-f(\overline{v}))^2\\ \notag
    &{\stackrel{\ast}\rightharpoonup} \left(\overline{v} - \overline{v}\right)\left(\overline{F} - F\left(\overline{v}\right)\right) - \left(\overline{f} - f\left(\overline{v}\right)\right)^2 \\ \notag
    &= - \left(\overline{f} - f\left(\overline{v}\right)\right)^2\\ \notag
    & \leq 0.
    \end{align}
However, since $F'(v_n) = f'(v_n)^2$,
\begin{align}\notag
    0 &= \left(\int_{\overline{v}}^{v_n}f'(t)\,dt \right)^2 - \left(f(v_n) - f\left( \overline{v} \right) \right)^2\\ \notag
    & \leq \left(v_n - \overline{v} \right)\int_{\overline{v}}^{v_n} f'(t)^2\,dt - \left(f(v_n) - f\left( \overline{v} \right) \right)^2\\ \notag
    &= \left(v_n - \overline{v} \right)\left(F(v_n) - F\left(\overline{v}\right) \right)-\left(f(v_n) - f\left( \overline{v} \right) \right)^2.
\end{align}
Combined with \eqref{less0}, the previous inequality implies that $(v_n-\overline{v})(F(v_n)-F(\overline{v})) - (f(v_n)-f(\overline{v}))^2\stackrel{\ast}{\rightharpoonup}0$. Hence, by \eqref{less0} and the definition of $\overline{f}$,
$$-\tfrac{1}{2}v_n^2{\rightharpoonup} \overline{f} =  f\left(\overline{v}\right) = -\tfrac{1}{2}\overline{v}^2\textup{ in }{\color{black} \mathcal{M}(\Omega) \textup{ weak} \ast.}$$
From this it follows that
$$
\int_\Omega v_n^2\,dx\,dz \to \int_\Omega \overline{v}^2\,dx\,dz.
$$
Together with the weak convergence of $v_n$ to $v$ in $L^2(\Omega)$, the previous equation gives
\begin{equation}\label{mn2}
    m_{n2} = v_n \to \overline{v} \textup{ in }L^2(\Omega).
\end{equation}
{\color{black}For the $L^q$ convergence of $\{ m_{n1}\}$, if $1\leq q <2$, by H\"older's inequality 
\begin{align*}
||m_{n1}-m_{k1}^{2}||_{L^q(\Omega)}&\leq C(\Omega)||m_{n1}-m_{k1}^{2}||_{L^2(\Omega)} \\
&\to 0 \textup{ as $n$, $k$ }\to \infty.
\end{align*}
If $q>2$, by H\"older's inequality and the uniform $L^p$ bound \eqref{lpbound}, we have
\begin{align*}
\int_\Omega |m_{n1}-m_{k1}|^q\,dx\,dz &\leq \left(\int_\Omega |m_{n1}-m_{k1}|^{p}\,dx\,dz \right)^{\frac{q-2}{p-2}} \left( \int_\Omega |m_{n1}-m_{k1}|^{2}\,dx\,dz\right)^{\frac{p-q}{p-2}} \\
&\to 0\quad\textup{as $n$, $k$ }\to \infty. 
\end{align*}
}The $L^{2}$-convergence of $m_{n2}$ is a consequence of \eqref{potbd}, \eqref{lpbound}, and \eqref{mn2}.
\end{proof}
{\color{black}
\begin{remark}\label{remark2}
In the previous proposition, we imposed an uniform $L^p$ bound on $m_{n2}$ in our assumptions. This is necessary since the well where the potential $W$ vanishes is unbounded, since we can easily pick a sequence with $u_z=u_x^2/2=C_n \rightarrow \infty$ such that the energy is zero while the sequence is divergent. It is open as to whether such a bound could be shown in certain scenarios, for example if $\{\nabla u_n \}$ have almost minimizing energy for $E_{\e_n}$ subject to a boundary condition for which the limiting problem has a minimizer $\nabla u$ satisfying an $L^p$ bound. \par
\end{remark}
\begin{remark}\label{remark3}
If we put our energy functional in a periodic setting, we can rewrite the energy (\ref{2denergy}) in the following form
$$
E_{\varepsilon}=\int_{\mathbb{T}^2}\frac{1}{\varepsilon}(|\partial_2|^{-1}(\partial_1 m_1-\partial_2\frac{1}{2}m_1^2))^2+\varepsilon(\partial_2m_1)^2 dx
$$
with $m_1=u_x$, $\partial_1 =\partial_z$, $\partial_2=\partial_x$. Adapting ideas handling  Burgers operator $\partial_1 w-\partial_2\frac{1}{2}w^2$  from \cite{C-AOttSte07, IORT20, OttSte10}, we can derive uniform bounds on  $m_{n2}$ in suitable Besov and $L^p$  spaces in terms of the energy. Compactness and additional properties can be obtained via Fourier analysis from those estimates.  See \cite{NY3} for further details.\par
\end{remark}
}

%\begin{remark}\label{remark1}
%In the previous proposition, the limiting vector field $m=(m_1,m_2)$ must be in $L^\infty(\Omega;\mathbb{R}^2)$. To see this, first note that due to the uniform $L^\infty$-bound \eqref{linfbound} on $m_{n2}$ and the convergence of $m_n$ in $L^2(\Omega)$, $m_2 \in L^\infty(\Omega)$ as well. For $m_1$, \eqref{potbd} together with the $L^\infty$-bound on $m_2$ implies that $m_1 \in L^\infty(\Omega)$.\par
%\end{remark}

\section{The Lower Bound}\label{sec:lbd} We consider the question of finding a limiting functional which provides a lower bound for $E_\e$ as $\e \to 0$. In order to state the theorem, we need to recall some properties of the space $BV(\Omega;\mathbb{R}^2)$ \cite[Chapter 3]{AmbFusPal00}. \par
First, we recall the BV Structure Theorem, which in our case states that for $m \in BV(\Omega;\mathbb{R}^2)$, the Radon measure $D m$ can decomposed as 
$$
Dm = D^am + D^j m + D^c m, 
$$
where all three measures are mutually singular and can be described as follows. The first and third components, $D^a m$ and $D^c m$, are the absolutely continuous part of $D m$ (with respect to Lebesgue measure) and the Cantor part, respectively. Most important for us is the jump part, $D^jm$, which can be expressed as 
$$
 (m^+ - m^-) \otimes \nu \mathcal{H}^1\mres J_m,
$$
where $J_m$ is the countably 1-rectifiable jump set of $m$, $\nu$ is orthogonal to the approximate tangent space at each point of $J_m$, and $m^+, m^-$ are the traces of $m$ from either side of $J_m$. \par
Next, we have the BV chain rule \cite{AmbDal90, Vol67}, which says that if $F \in C^1(\mathbb{R}^2;\mathbb{R}^2)$ with bounded derivatives and $m \in BV(\Omega;\mathbb{R}^2)$, then $F \circ m$ is in $BV(\Omega; \mathbb{R}^2)$ and
$$
D(F \circ m) = \nabla F(m) \nabla m \mathcal{L}^2 + \nabla{F}(\tilde{m})D^cm + (F(m^+)-F(m^-)) \otimes \nu \mathcal{H}^1 \mres J_m.
$$
Here $\tilde{m}$ is the approximate limit of $m$ and is defined off of $J_m$ (cf. \cite[Definition 3.63]{AmbFusPal00}) and $\nabla m$ is the matrix of approximate partial derivatives of $m$ defined almost everywhere. Taking the trace on both sides, we have
\begin{align}\label{tracediv}
\dive (F \circ m) = \tr(\nabla F(m) &\nabla m) \mathcal{L}^2 + \tr (\nabla{F}(\tilde{m})D^cm) \\ \notag &+ (F(m^+)-F(m^-)) \cdot \nu \mathcal{H}^1 \mres J_m
.\end{align}\par
Given those preliminary results, let us focus on the problem at hand. In the compactness result \Cref{2dcpt}, the limiting function $u$ is in $W^{1,{\color{black}2}}(\Omega)$ and satisfies $\dz u = (\dx u )^2/2 $ {\color{black} a.e.} With that in mind, we define
\begin{equation}\label{a0}
\mathcal{A}_0 := \{u \in  W^{1,\infty}(\Omega) : \nabla u \in BV(\Omega;\mathbb{R}^2) \textup{ and }\dz u = (\dx u )^2/2\textup{ a.e.}\}
.\end{equation}
{\color{black}Here we assume that the limit function $u\in  W^{1,\infty}(\Omega)$ for application of BV chain rule. }
According to the previous paragraphs, the gradient of $u \in \mathcal{A}_0$ has a jump set $J_{\nabla u}$. The traces along $J_{\nabla u}$ satisfy a jump condition due to the fact that $\nabla u$ is a gradient. Indeed, application of the divergence theorem reveals that along $J_{\nabla u}$, we have 
\begin{equation}\label{jumpcond}
\nabla u^+ \cdot \nu^\perp = \nabla u^- \cdot \nu^\perp .
\end{equation}
%We also note that the derivation of boundary conditions from earlier applies equally well in the class $\mathcal{A}_0$.\par
Next, we define the vector field 
\begin{equation}\label{sigmadef}
\Sigma(m)=(\Sigma_1(m),\Sigma_2(m)) :=\left( m_1m_2 - \frac{1}{6}m_1^3, -\frac{1}{2}m_1^2\right).
\end{equation}
A discussion of the motivation behind the definition of $\Sigma$ is below the statement of the upcoming theorem {\color{black}and in \Cref{entremark}}. {\color{black}Note that $\Sigma(\nabla u)$ is an $L^1$ function if we have $\dx u \in L^3$ and $\dz u \in L^{\frac{3}{2}}$, which motivates the choice of convergence in \Cref{2dlbd}; see also \Cref{lscremark}.} If $u\in \mathcal{A}_0$, then since $\nabla u$ is bounded and  in $BV(\Omega;\mathbb{R}^2)$, we can apply the BV chain rule and \eqref{tracediv} to $\Sigma \circ \nabla u$. A short calculation yields%\color{black}
\begin{align}\notag
\dive \Sigma( \nabla u) = \dx^2 u (\dz u - (\dx u)^2/2) \mathcal{L}^2 &+ ( \widetilde{\dz u} - (\widetilde{\dx u})^2/2) D^c(\nabla u)_{11}\\ \notag&+ (\Sigma(\nabla u^+) - \Sigma(\nabla u^-))\cdot \nu \mathcal{H}^1\mres J_{\nabla u}.
\end{align}
In the second term on the right hand side, $D^c(\nabla u)_{11}$ is the first entry in the first column of $D^c(\nabla u)$.\color{black}
\begin{comment}
How did you get that expression for the second term? I can only get (\widetilde{\\dz u}-(\wildetilde{\dx u})^2/2)[D^c(\nabla u)]_{11} rather than 1/2 \tr D^c(\nabla u) - X.
I think I was wrong :P - M.
\end{comment}

 Since $\dz u = (\dx u)^2/2$ a.e., we find
%cf. Ambrosio-De Lellis-Mantegazza for the corresponding result in Aviles-Giga, and they go through the relevant measure theory of approximate limits, etc. . . I think that since \nabla u is bounded, that the approximate limit of \dz u - (\dx u)^2/2 is equal to putting approximate limit on each, and putting approximate limit on the whole thing gives zero since that quantity is zero a.e., cf. pg 5 of ADM
\begin{equation}\label{limitmeasure}
\dive \Sigma( \nabla u) = (\Sigma(\nabla u^+) - \Sigma(\nabla u^-))\cdot \nu \mathcal{H}^1\mres J_{\nabla u}.
\end{equation}
As a final preliminary, we provide %\color{black}
two explicit expressions 
\color{black}for $(\Sigma(\nabla u^+) - \Sigma(\nabla u^-))\cdot \nu$. The proofs can be found in the appendix.
\begin{lemma}\label{jumpexp}
Suppose $m^+$, $m^-$ satisfy $m_2 = (m_1)^2/2$, and set $p=m^+-m^-$, $n = p /|p| $, so that $m^+$, $m^-$ are admissible traces across a jump set with normal vector parallel to $ n$. Then%\color{black}
\begin{align}\notag
(\Sigma(\mp) - \Sigma(\mm)) \cdot n &= \frac{n_1}{2} \left(p_1p_2-m^-_1p_1^2 -\frac{1}{3}p_1^3 \right) \\ \label{jcost} 
&= \displaystyle\frac{|m_1^+ - m_1^-|^3}{12\sqrt{1 +\frac{1}{4}(m_1^+ + m_1^-)^2}}
.\end{align}
\end{lemma}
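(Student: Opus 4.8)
The statement is a finite-dimensional algebraic identity, so the plan is a direct computation organized around the constraint $m_2 = m_1^2/2$. First I would record the consequences of the constraint for the jump vector $p = m^+ - m^-$: writing $a = m_1^+$ and $b = m_1^-$, the first component is $p_1 = a - b$, while the second satisfies $p_2 = m_2^+ - m_2^- = \tfrac12(a^2 - b^2) = \tfrac12(a+b)\,p_1$. This immediately gives $|p|^2 = p_1^2\big(1 + \tfrac14(a+b)^2\big)$, which is exactly the quantity under the square root in the target expression, so the substitution of $|p|$ at the very end is painless once the numerator is in hand.

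For the first equality I would expand $\Sigma(m^+) - \Sigma(m^-)$ componentwise using $m^+ = m^- + p$ and substituting $m_2^- = b^2/2$ wherever it appears. The key cancellation is that in the jump of $\Sigma_1 = m_1 m_2 - \tfrac16 m_1^3$ the term $p_1 m_2^-$ produced by the product expansion cancels against the $-\tfrac12 b^2 p_1$ coming from the cubic, leaving $\Sigma_1(m^+) - \Sigma_1(m^-) = b p_2 + p_1 p_2 - \tfrac12 b p_1^2 - \tfrac16 p_1^3$, while $\Sigma_2(m^+) - \Sigma_2(m^-) = -p_2$. Taking the dot product with $n = p/|p|$ and using $p_2 = b p_1 + \tfrac12 p_1^2$ once more to collapse $b p_1 p_2 + \tfrac12 p_1^2 p_2 = p_2^2$ yields precisely $\tfrac{n_1}{2}\big(p_1 p_2 - b p_1^2 - \tfrac13 p_1^3\big)$.

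The second equality is then a clean simplification: substituting $p_2 = \tfrac12(a+b)p_1$ into $p_1 p_2 - b p_1^2 - \tfrac13 p_1^3$ shows this bracket equals $\tfrac16 p_1^3$, so the whole expression becomes $\tfrac{n_1}{2}\cdot\tfrac16 p_1^3 = \tfrac{p_1^4}{12|p|}$. Using $p_1^4/|p_1| = |p_1|^3$ together with $|p| = |p_1|\sqrt{1 + \tfrac14(a+b)^2}$ produces the stated form. I do not expect any genuine obstacle; the only thing to watch is the bookkeeping in the two cancellations, both of which trace back to the single relation $p_2 = \tfrac12(a+b)p_1$ forced by $m_2 = m_1^2/2$. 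As a consistency check I would also verify directly, via the factoring $a^3 - b^3 = p_1(a^2 + ab + b^2)$, that the quadratic form $\tfrac13(a^2+ab+b^2) - \tfrac14(a+b)^2$ collapses to the perfect square $\tfrac1{12}(a-b)^2$, which is the structural reason the cubic jump cost appears.
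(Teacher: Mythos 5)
Your proof is correct and takes essentially the same approach as the paper: a direct componentwise expansion of $\Sigma(m^+)-\Sigma(m^-)$ with $m^+=m^-+p$, exploiting the parabola constraint through the single relation $p_2 = \tfrac{1}{2}(m_1^+ + m_1^-)p_1$. The only organizational difference is that the paper verifies the second equality by an independent computation on the parabola (using that $\Sigma(m)=\left(\tfrac{1}{3}m_1^3,\,-\tfrac{1}{2}m_1^2\right)$ there), whereas you derive it from the first via the collapse of the bracket to $\tfrac{1}{6}p_1^3$; your closing consistency check, $\tfrac{1}{3}(a^2+ab+b^2)-\tfrac{1}{4}(a+b)^2=\tfrac{1}{12}(a-b)^2$, is precisely the perfect-square identity that drives the paper's version.
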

\begin{remark}\label{entremark}%\color{black}
The cubic growth for small jumps is the same as in the Aviles-Giga problem \cite{JinKoh00}, and is a common feature of lower bounds involving ``entropies" such as $\Sigma$ \cite{IgnMer12}. It also occurs in scalar conservation laws, where the entropy production is asymptotically cubic for small jumps \cite{IgnMer11}. {\color{black}In the former setting, the energies under consideration are of the form
$$
\int_\Omega \e |\nabla m|^2 + \frac{1}{\e} W(m) \,dx \,dz, \quad \nabla \cdot m =0,
$$
and an entropy $\Phi:\mathbb{R}^2\to \mathbb{R}^2$ is a smooth map such that $\nabla \cdot [\Phi(m)]=0$ for smooth $m$ with $\nabla \cdot m=0$ and $W(m)=0$. One can check that after setting $m=(-\dz u, \dx u)$, $\Sigma$ can be used to construct the entropy $\Phi := -\Sigma(m_2,-m_1)$. Maps of this type are also the basis of the compactness result in \cite{DKMO01}.%Here by entropy we refer to a map $\Phi(u)$ satisfying 
%\begin{equation*}
%\int_{\Omega}\nabla\cdot \Phi(u) dx\leq E_{\varepsilon}(u).
%\end{equation*}
%A systematic approach to find entropies for singularly perturbed variational problems can be found in \cite{IgnMon20}.
}\color{black}
\end{remark}\par
We now state the main theorem for this section.
\begin{theorem}\label{2dlbd}
Let $\Omega \subset \mathbb{R}^2$ be a bounded domain. Consider $\e_n \searrow 0$, $\{u_n \}\subset H^1(\Omega)$ {\color{black}with $\dx^2 u_n \in L^2(\Omega)$ such that }
\begin{equation}\label{lbdassump}
{\color{black}\dx u_n \to \dx u\textit{ in }L^3(\Omega) \quad \textit{and} \quad \dz u_n \to \dz u  \textit{ in }L^{\frac{3}{2}}(\Omega)}
\end{equation}
for some $u\in H^1(\Omega)$ with $\nabla u \in {\color{black}(L^{\infty}\cap} BV)(\Omega;\mathbb{R}^2)$. Then
\begin{equation}\label{lscineq}
\liminf_{n \to \infty} E_{\e_n}(u_n) \geq \int_{J_{\nabla u}} |(\Sigma(\nabla u^+) - \Sigma(\nabla u^-))\cdot \nu |\, d\mathcal{H}^1.
\end{equation}
\end{theorem}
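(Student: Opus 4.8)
The plan is to derive \eqref{lscineq} by combining the BPS decomposition \eqref{decomp} with a duality argument in the spirit of Jin--Kohn \cite{JinKoh00}. If $\liminf_n E_{\e_n}(u_n)=+\infty$ the inequality is trivial, so I may pass to a subsequence (not relabeled) realizing the $\liminf$ with $\sup_n E_{\e_n}(u_n)\le C<\infty$. Along this subsequence $\|\dz u_n-\tfrac12(\dx u_n)^2\|_{L^2}^2\le C\e_n\to 0$; comparing this with the assumed convergence \eqref{lbdassump} (which forces $\dz u_n-\tfrac12(\dx u_n)^2\to \dz u-\tfrac12(\dx u)^2$ in $L^{3/2}$) shows $\dz u=\tfrac12(\dx u)^2$ a.e. Together with $\nabla u\in(L^\infty\cap BV)(\Omega;\R^2)$, this places $u$ in the class $\mathcal{A}_0$ of \eqref{a0}, so the structure formula \eqref{limitmeasure} is available for $\Sigma(\nabla u)$.

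The analytic core is the pointwise identity $\dive\Sigma(\nabla u_n)=\dxx u_n\,(\dz u_n-\tfrac12(\dx u_n)^2)$, which by Young's inequality gives the domination of the entropy production by the energy density,
\begin{equation}\notag
\left|\dive\Sigma(\nabla u_n)\right|\le \tfrac12\Bigl[\tfrac{1}{\e_n}\bigl(\dz u_n-\tfrac12(\dx u_n)^2\bigr)^2+\e_n(\dxx u_n)^2\Bigr],
\end{equation}
which is exactly the content of the two signed forms of \eqref{decomp}. Since $u_n$ only satisfies $u_n\in H^1(\Omega)$ with $\dxx u_n\in L^2(\Omega)$, I would justify this identity by mollification $u_n^\delta=u_n\ast\rho_\delta$: the identity is classical for $u_n^\delta$, and the terms carrying the uncontrolled mixed derivative $\dx\dz u_n$ cancel, so one passes $\delta\to 0$ after noting that the energy bound upgrades $\dx u_n$ to $L^4(\Omega)$ (hence $(\dx u_n)^2\in L^2$), while $\dx u_n\in L^3$ and $\dz u_n\in L^2$ make each product converge in $L^1$. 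This lets me read $\dive\Sigma(\nabla u_n)$ as the stated $L^1$ function. \emph{This low-regularity justification is the main obstacle}: without control on $\dx\dz u_n$ the identity cannot be differentiated naively, and it is the energy bound rather than the hypotheses alone that supplies the $L^4$ control needed to pass to the limit.

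Finally I localize and dualize. The convergences in \eqref{lbdassump} give $\Sigma(\nabla u_n)\to\Sigma(\nabla u)$ in $L^1(\Omega;\R^2)$ by H\"older's inequality applied to the three terms $\dx u_n\,\dz u_n$, $(\dx u_n)^3$, and $(\dx u_n)^2$; hence for any $\psi\in C_c^\infty(\Omega)$,
\begin{equation}\notag
\int_\Omega\psi\,\dive\Sigma(\nabla u_n)\,dx\,dz=-\int_\Omega\nabla\psi\cdot\Sigma(\nabla u_n)\,dx\,dz\longrightarrow\int_{J_{\nabla u}}\psi\,\bigl(\Sigma(\nabla u^+)-\Sigma(\nabla u^-)\bigr)\cdot\nu\,d\mathcal{H}^1,
\end{equation}
the limit being \eqref{limitmeasure}. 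For $\psi$ with $|\psi|\le 1$, multiplying the domination by $|\psi|$ and using $\int_\Omega|\psi|(\text{energy density})\le E_{\e_n}(u_n)$ together with $|\psi|\,|\dive\Sigma|\ge\psi\,\dive\Sigma$ yields $E_{\e_n}(u_n)\ge\int_\Omega\psi\,\dive\Sigma(\nabla u_n)\,dx\,dz$. Taking $\liminf_n$ and then the supremum over all $\psi\in C_c^\infty(\Omega)$ with $|\psi|\le 1$ produces the total variation of the measure $\bigl(\Sigma(\nabla u^+)-\Sigma(\nabla u^-)\bigr)\cdot\nu\,\mathcal{H}^1\mres J_{\nabla u}$, i.e. the right-hand side of \eqref{lscineq}; here $\nabla u\in L^\infty$ ensures the jump density lies in $L^1(\mathcal{H}^1\mres J_{\nabla u})$, so that this supremum is genuinely the total variation.
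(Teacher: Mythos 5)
Your proposal is correct and follows essentially the same route as the paper's proof: the pointwise domination of $\dive \Sigma(\nabla u_n)$ by the energy density via Young's inequality (the paper's \eqref{divcalc}), testing against compactly supported functions, the $L^1$ convergence $\Sigma(\nabla u_n)\to\Sigma(\nabla u)$ deduced from \eqref{lbdassump}, and identification of the limiting measure via the chain-rule expression \eqref{limitmeasure} followed by taking the total variation. Your mollification argument and your explicit verification that $\dz u=\tfrac{1}{2}(\dx u)^2$ a.e.\ (needed so that $u\in\mathcal{A}_0$ and \eqref{limitmeasure} applies) simply spell out steps the paper compresses into its ``by density'' remark and its appeal to the class $\mathcal{A}_0$.
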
\par
\begin{comment}
{\color{black}
The key ingredient in the proof of \Cref{2dlbd} is the fact that the vector field $\Sigma$ satisfies 
$$
\dive \Sigma(\nabla u) = 2\dx^2 u \left(\dz u - \frac{1}{2}(\dx u)^2 \right).
$$
Upon applying the inequality $2ab \leq \e a^2 +  b^2/\e$, this equality can be used to bound the energies $E_{\e_n}(u_n)$ from below, after which one can take the limit $u_n \to u$. This is the idea of Jin and Kohn for the Aviles-Giga problem \cite{JinKoh00}, where the ``Jin-Kohn entropy" plays the part of $\Sigma$ above. 
I am moving this part after the comments for BPS.}
\end{comment}
{%\color{black}
Our choice of $\dive \Sigma(\nabla u)$ is motivated by the BPS decomposition. By the BPS\ decomposition, we can write $\left( \ref{2dsingular}\right) $ as 
\begin{eqnarray}
E_{\varepsilon }\left( u\right)  &=&\frac{1}{2}\int_{\Omega }\left[ \frac{1}{%
\varepsilon }\left( \partial _{z}u-\frac{1}{2}\left( \partial _{x}u\right)
^{2}\right) ^{2}+\varepsilon \left( \partial _{x}^{2}u\right) ^{2}\right]
dx\,dz  \notag \\
&=&\frac{1}{2}\int_{\Omega }\frac{1}{\varepsilon }\left( \partial _{z}u-%
\frac{1}{2}\left( \partial _{x}u\right) ^{2}-\varepsilon \partial
_{x}^{2}u\right) ^{2}\,dx \,dz+\int_{\Omega }\dive \Sigma(\nabla u)\,dz \,dz ,
\label{bps}
\end{eqnarray}%
where $\Sigma(\nabla u) =\left( \partial _{z}u\partial _{x}u-%
\frac{1}{6}\left( \partial _{x}u\right) ^{3},-\frac{1}{2}\left( \partial
_{x}u\right) ^{2}\right) .$ A direct conclusion from $\left( \ref{bps}%
\right) $ is 
\begin{equation}
E_{\varepsilon }\left( u\right) \geq \int_{\Omega }\dive \Sigma(\nabla u)    \label{lowerbd}
\end{equation}%
and $E_{\varepsilon }$ is minimized by mappings satisfying $\left( \ref%
{bpsequation}\right) .$ Bounding the energy from below by the integral of a total derivative is also the main idea of Jin and Kohn {\cite{JinKoh00}} for the Aviles-Giga problem, where the ``Jin-Kohn" entropy plays the part of $\Sigma$ above. 

%Using the fact that the vector field $\Sigma$ satisfies 
%$$
%\dive \Sigma(\nabla u) = \dx^2 u \left(\dz u - \frac{1}{2}(\dx u)^2 \right),
%$$
%one can also use the the inequality $ab \leq \e a^2 /2+  b^2/2\e$ to bound the energies $E_{\e_n}(u_n)$ from below.  This is the idea of Jin and Kohn for the Aviles-Giga problem \cite{JinKoh00}, where the ``Jin-Kohn entropy" plays the part of $\Sigma$ above. 

Equation $\left( \ref{lowerbd}\right) $ is the starting point of our lower bound
estimate, and we use $\left( \ref{bpsequation}\right) $ in our construction
of 1D ansatz in our upper bound estimate in {\color{black}\Cref{sec:upbd}}. }\par

\begin{comment}
We also have a formula for $| \dive \Sigma(u)|$, which in particular holds when the assumptions \eqref{lbdassump} are satisfied and $\liminf E_{\e_n}(u_n) $ is finite.
\begin{proposition}\label{divformprop}
If $u \in W^{1,\infty}(\Omega)$ and $\nabla u \in BV(\Omega;\mathbb{R}^2)$ with jump set $J_{\nabla u}$, then 
\begin{equation}\label{divform}
| \dive \Sigma(u)|=|\Sigma(\nabla u^+) - \Sigma(\nabla u^-) |\mathcal{H}^1\mres J_{\nabla u}.
\end{equation}
\end{proposition}
\begin{proof}
The representation \eqref{divform} has been proved in \cite{AviGig99,AmbDeLMan99} by using the Vol'pert chain rule \cite{Vol67,AmbDal90}. The two assumptions in \eqref{lbdassump} together give $u \in W^{1,\infty}(\Omega)$, so that the same arguments apply in this setting.
\end{proof}
\end{comment}
\begin{proof}[Proof of \Cref{2dlbd}]
We begin with the calculation
\begin{align}\label{divcalc}
\dive \Sigma(\nabla v) &= \dx \left[ \dx v \dz v - \frac{1}{6}(\dx v)^3\right] + \dz \left[ -\frac{1}{2}(\dx v)^2 \right] \\ \notag
&= \left( \dx^2v \dz v + \dx v \dzx v - \frac{1}{2}(\dx v)^2\dx^2 v - \dx v \dxz v\right) \\ \notag
&= \dx^2 v \left(\dz v - \frac{1}{2}(\dx v)^2 \right) \\ \notag
&\leq  \displaystyle\frac{\left(\dz v - \frac{1}{2}(\dx v)^2\right)^2}{2\e} + \frac{\e (\dxx v)^2}{2}
\end{align}
which holds for any smooth $v$, and hence by density any $v \in H^1(\Omega)$ with $\dx^2 v \in L^2(\Omega)$. Now if we plug in $\nabla u_n$ to \eqref{divcalc}, multiply by a test function $\varphi \in C_c^\infty(\Omega)$, and integrate by parts, we have
\begin{align}\label{divcalc2}
\int_\Omega -\Sigma(\nabla u_n) \cdot \nabla \varphi \, dx\,dz  &= \int_\Omega \dive \Sigma(\nabla u_n) \varphi \,dx \,dz \\ \notag
&\leq E_{\e_n}(u_n)\| \varphi \|_{L^\infty}.
\end{align}
As outlined in the discussion preceding the proof, we wish to take the limit as $n \to \infty$ on the left hand side of \eqref{divcalc2} to prove \eqref{lscineq}. If $\liminf E_{\e_n}(u_n)=\infty$, then \eqref{lscineq} is immediate. Therefore, we may suppose that
\begin{equation}\label{finliminf}
\liminf_{n \to \infty} E_{\e_n}(u_n) < \infty.
\end{equation}
\begin{comment}
In this case, we have 
\begin{equation}
\int_\Omega \left(\dz u_n - \frac{1}{2}(\dx u_n)^2\right)^2 \,dx \,dz \leq C\e_n,
\end{equation}
so that $\|\dz u_n - \frac{1}{2}(\dx u_n)^2\|_{L^2(\Omega)} \to 0$. {\color{black} Moreover , assumption \eqref{lbdassump} and \Cref{2dcpt} implies $\nabla u_n \to \nabla u$ in $L^2$,  therefore} $\dz u = (\dx u)^2/2$ almost everywhere. We invoke these observations % and the $L^2$-convergence of $\nabla u_n$ to $\nabla u$ 
to estimate
\begin{align}\notag
\|&\Sigma_1(\nabla u_n) - \Sigma_1(\nabla u) \|_{L^1} \\ \notag &= \left\|\frac{\dx u_n}{3}\left(\dz u_n - \frac{1}{2}(\dx u_n)^2\right)+\frac{2}{3}\dx u_n \dz u_n- \frac{\dx u}{3}\left(\dz u - \frac{1}{2}(\dx u)^2\right)-\frac{2}{3}\dx u \dz u\right\|_{L^1} \\ \notag
&=  \left\|\frac{\dx u_n}{3}\left(\dz u_n - \frac{1}{2}(\dx u_n)^2\right)+\frac{2}{3}\dx u_n \dz u_n-\frac{2}{3}\dx u \dz u\right\|_{L^1} \\ \notag
&\leq\frac{1}{3}\| \dx u_n \|_{L^2}\left\| \dz u_n - \frac{1}{2}(\dx u_n)^2\right\|_{L^2}+ \frac{2}{3}\|\dx u_n \dz u_n - \dx u \dz u \|_{L^1} \\ \notag
& \to 0,
\end{align}
and
\begin{align}\notag
\|\Sigma_2(\nabla u_n) - \Sigma_2(\nabla u) \|_{L^1} = \frac{1}{2}\|(\dx u_n)^2-(\dx u)^2 \|_{L^1}  \to 0.
\end{align}
\end{comment}
{\color{black}By the convergence \eqref{lbdassump}, it follows that $\Sigma(\nabla u_n) \to \Sigma(\nabla u)$ in $L^1(\Omega;\mathbb{R}^2)$.} Then we can let $n \to \infty$ in \eqref{divcalc2}:
\begin{align}\label{totalvar}
\int_\Omega -\Sigma(\nabla u) \cdot \nabla \varphi \, dx\,dz &= \lim_{n\to \infty} \int_\Omega -\Sigma(\nabla u_n) \cdot \nabla \varphi \, dx\,dz \\ \notag
 &\leq \liminf_{n\to \infty} E_{\e_n}(u_n) \|\varphi \|_{L^\infty} .
\end{align}
%By \Cref{remark1} and \eqref{lbdassump}, we can without loss of generality take $\nabla u \in L^\infty(\Omega)$, so that $u \in \mathcal{A}_0$. 
The lower bound \eqref{lscineq} is obtained by taking the total variation of $\dive \Sigma(\nabla u)$ in \eqref{totalvar} and using the expression \eqref{limitmeasure} for $\dive \Sigma(\nabla u)$. 
\end{proof}
\begin{remark}\label{lscremark}
Upon examination of the proof of \Cref{2dlbd}, we see that if
{\color{black}$\dx u_n \to \dx u$ in $L^3$, $\dz u_n \to \dz u$ in $L^{\frac{3}{2}}$,} and $\liminf E_{\e_n}(u_n) < \infty$, then $\dive \Sigma (\nabla u)$ is a finite Radon measure and
\begin{equation}\notag
\liminf_{n \to \infty} E_{\e_n}(u_n) \geq | \dive \Sigma(u) |(\Omega).
\end{equation}
{\color{black}This argument holds even when $\nabla u \notin (L^{\infty}\cap BV)(\Omega;\mathbb{R}^2)$, since that assumption was only used when applying the $BV$ chain rule to calculate $| \dive \Sigma(\nabla u) |$. }This indicates that the space 
\begin{equation}\notag
\{u \in W^{1,{\color{black}\frac{3}{2}}}(\Omega): \dx u \in L^3,\,\dz u = (\dx u)^2/2 \textup{ and } \dive \Sigma(\nabla u) \textup{ is a Radon measure}\}
\end{equation}
is the natural limiting space for this sequence of variational problems, similar to the Aviles-Giga space defined in \cite{AmbDeLMan99}. It is possible that this space contains elements which are not in $BV(\Omega;\mathbb{R}^2)$, although we do not pursue this issue further. We refer the reader to \cite[pgs. 338-340]{AmbDeLMan99} for an example of such a map in the Aviles-Giga problem. {\color{black}This is one reason that upper bound \Cref{2dupbd} does not yield full $\Gamma$-convergence; the other is the restriction that $\nabla u \in L^\infty(\Omega)$. The current techniques involved in such constructions, developed in \cite{ConDeL07} and \cite{Pol08}, require both of these conditions on $\nabla u$, and removing them is likely non-trivial.} 
\end{remark}
\begin{comment}
\begin{remark}\label{lbdbc}
We have not addressed the lower bound in the presence of boundary conditions. 
%As discussed in \Cref{bcpersist}, 
If we have
\begin{equation}\notag
%(u_n)_{\partial \Omega} = g \textup{ and }
(\nabla u_n)_{\partial \Omega} \cdot \tau_{\Omega} = h \cdot \tau_\Omega,
\end{equation}
then the convergence of $\nabla u_n$ to $\nabla u$ implies that
\begin{equation}\notag
%u_{\partial \Omega} = g \textup{ and } 
\nabla u_{\partial \Omega} \cdot \tau_{\Omega} = h \cdot \tau_\Omega
\end{equation}
as well. Let us assume that $h$ also satisfies $h_2 = (h_1)^2/2$, so that $h$ is compatible with the range of $u$. Then the lower bound would include a term of the form
\begin{equation}\notag
\int_{\{\nabla u\cdot \nu_\Omega \neq h \cdot \nu_\Omega\}}  |(\Sigma(\nabla u) - \Sigma(h))\cdot \nu_\Omega |\, d\mathcal{H}^1.
\end{equation}
Similar boundary terms arise in other variational problems coming from liquid crystals; see for example \cite[Equation 3.4]{GolSteVen19} or \cite[Remark 3.5]{GolNovSteVen20}.
\end{remark}
\end{comment}
\color{black}\begin{remark}\label{construct}
The identification of the cost \eqref{jcost} along defect curves could be utilized in conjunction with the geometric rigidity induced by the requirement that $\dz u = \dx u^2/2$ to compute critical configurations for the limiting energy. See for example \cite[Example 4.2]{GolNovSteVen20}, in which a critical configuration for a nematic-isotropic phase transition is calculated with the far field being given by the nematic ground state. This bears a resemblance to  some of the BPS constructions for smectics from \cite{AleKamSan12}, and such ideas could be employed in the smectic context as well.
\end{remark}\color{black}
\section{An Estimate for the Minimum Energy on a Square and the Upper Bound}\label{sec:upbd}
In light of \Cref{2dlbd}, we would like to know whether the lower bound can be matched by a construction, yielding a sharp lower bound. When $u \in \mathcal{A}_0$, we provide an affirmative answer to this question.
\begin{theorem}\label{2dupbd}
Let $u\in \mathcal{A}_0$ {\color{black}and $\partial\Omega$ be $C^2$}. Then there exists a sequence $\{ u_\e \}\subset C^2(\Omega)$ such that 
\begin{equation}\notag
u_\e \to u \textit{ in }W^{1,p}(\Omega) \textit{ for all }1\leq p < \infty
\end{equation}
and
\begin{equation}
E_\e (u_\e) \to \int_{J_{\nabla u}} |(\Sigma(\nabla u^+) - \Sigma(\nabla u^-))\cdot \nu |\, d\mathcal{H}^1
.\end{equation}
\end{theorem}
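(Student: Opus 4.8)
The plan is to build the recovery sequence from the BPS decomposition \eqref{bps}: if I produce $u_\e$ that satisfies the BPS equation \eqref{bpsequation} exactly inside a transition layer of width $\sim \e$ around $J_{\nabla u}$ and equals $u$ in the bulk, then the perfect square in \eqref{bps} vanishes in the layer, so the energy is governed by the total derivative $\int_\Omega \dive \Sigma(\nabla u_\e)$, which concentrates on the jump set. By the general upper-bound machinery of Poliakovsky \cite{Pol08} (building on \cite{ConDeL07}), it suffices to treat a single flat interface on a square: given traces $\nabla u^\pm$ (with $\dz u^\pm = (\dx u^\pm)^2/2$ and the compatibility \eqref{jumpcond}) separated by a hyperplane of unit normal $\nu$, I must exhibit a one-dimensional profile whose energy per unit length of interface equals the cost $|(\Sigma(\nabla u^+) - \Sigma(\nabla u^-))\cdot \nu|$ appearing in \eqref{lscineq}; the global sequence over the rectifiable set $J_{\nabla u}$ is then assembled by Poliakovsky's theorem. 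Moreover, once the matching $\limsup$ is in hand, the reverse inequality is free: since $u_\e \to u$ in every $W^{1,p}$, in particular $\dx u_\e \to \dx u$ in $L^3$ and $\dz u_\e \to \dz u$ in $L^{3/2}$, so \Cref{2dlbd} applies to $\{u_\e\}$ and forces the convergence $E_\e(u_\e) \to \int_{J_{\nabla u}}|(\Sigma(\nabla u^+)-\Sigma(\nabla u^-))\cdot\nu|\,d\mathcal{H}^1$.

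For the one-dimensional ansatz, write the traces as $m^\pm = \nabla u^\pm = (m_1^\pm, (m_1^\pm)^2/2)$. Set $w = \dx u$ and differentiate \eqref{bpsequation} in $x$ to obtain the viscous Burgers equation $\dz w - w\,\dx w = \e\,\dxx w$. I take the traveling-wave profile $w(x,z) = W((x - sz)/\e)$ with Rankine--Hugoniot speed $s = -\tfrac12(m_1^+ + m_1^-)$; by the gradient compatibility \eqref{jumpcond}, this is precisely the speed for which the level line $\{x = sz\}$ has normal parallel to $\nu$. The profile solves the first-order ODE $W' = -\tfrac12(W - m_1^-)(W - m_1^+)$, a monotone heteroclinic from $m_1^-$ to $m_1^+$ that approaches its endpoints exponentially fast. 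Then $\nabla u_\e := (W, \tfrac12 W^2 + \e\,\dx W)$ is curl-free (the curl-free condition reduces exactly to the traveling-wave ODE), hence a genuine gradient, and by construction $\dz u_\e - \tfrac12(\dx u_\e)^2 - \e\,\dxx u_\e \equiv 0$, so the perfect square in \eqref{bps} vanishes.

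The energy is then a direct computation. With the BPS equation holding, the integrand of \eqref{2denergy} reduces to $\e(\dxx u_\e)^2$, and since $\dxx u_\e = W'/\e$, integrating across the layer at fixed $z$ (substituting $\eta = (x-sz)/\e$) gives $\int_{\mathbb R}(W')^2\,d\eta$ per unit $z$. Using $W'\,d\eta = dW$,
\begin{equation}\notag
\int_{\mathbb R}(W')^2\,d\eta = \int_{m_1^-}^{m_1^+}\!\!\left(-\tfrac12(W - m_1^-)(W - m_1^+)\right)dW = \frac{|m_1^+ - m_1^-|^3}{12}.
\end{equation}
Finally, a unit $z$-interval corresponds to arc length $\sqrt{1+s^2} = \sqrt{1 + \tfrac14(m_1^+ + m_1^-)^2}$ along the interface, so the energy per unit length is $\frac{|m_1^+ - m_1^-|^3}{12\sqrt{1 + \frac14(m_1^+ + m_1^-)^2}}$, which by \Cref{jumpexp}, equation \eqref{jcost}, is exactly $|(\Sigma(\nabla u^+) - \Sigma(\nabla u^-))\cdot \nu|$. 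Truncating $W$ at distance $\sim \e|\log\e|$ from the interface and gluing to $u$ in the bulk yields a $C^2$ competitor whose gradient and energy differ from the profile's by exponentially small amounts, so the local $\limsup$ matches the cost exactly.

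The main difficulty is not the local computation but the global assembly: passing from the single flat interface to a recovery sequence over the general countably rectifiable set $J_{\nabla u}$, with a $C^2$ boundary, while retaining the constraint $\dz u = (\dx u)^2/2$ in the bulk and keeping $\nabla u_\e \in L^\infty$. This is exactly where the hypotheses $u \in \mathcal{A}_0$ (so $\nabla u \in L^\infty \cap BV$) and $\partial\Omega \in C^2$ enter, and where I rely on Poliakovsky's theorem \cite{Pol08}: it approximates $u$ by configurations whose jump sets are locally flat, handles corners and triple junctions, and controls the interpolation errors, including the regions carrying the Cantor part of $D(\nabla u)$, which contributes nothing to \eqref{limitmeasure} and hence to the target. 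Verifying that our anisotropic functional \eqref{2denergy}---in which only $\dxx u$ and the combination $\dz u - \tfrac12(\dx u)^2$ appear---fits the structural hypotheses of that theorem, with surface tension $|(\Sigma(\nabla u^+) - \Sigma(\nabla u^-))\cdot\nu|$ realized by the profile above and with no relaxation gap (guaranteed by the matching lower bound \Cref{2dlbd}), is the crux of the argument.
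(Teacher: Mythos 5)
Your proposal is correct and follows essentially the same route as the paper: a one-dimensional profile saturating the BPS decomposition across a flat interface, assembled globally via Poliakovsky's theorem (\Cref{polthm}), with the limiting cost identified through \Cref{jumpexp}. Your viscous-Burgers traveling wave is in fact the paper's ansatz \eqref{grads}--\eqref{ivp} in disguise---one checks that $(W,\tfrac{1}{2}W^2+\e\,\dx W)$ lies exactly on the segment $g_\e\, p + m^-$, with the profile ODEs agreeing after the linear change of variables $(x,z)\cdot\nu = n_1(x-sz)$---and your use of \Cref{2dlbd} on the recovery sequence to exclude a relaxation gap is equivalent in substance to the first inequality of \eqref{relation} in \Cref{boxrate}.
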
\par
As a first step towards proving \Cref{2dupbd}, we will analyze a local problem for $E_\e$ posed on a square, with boundary data chosen to induce a limiting jump set parallel to two of the sides. We will show that up to an exponentially small error in $\e$, the minimum energy for the local problem is attained by a ``one-dimensional competitor" with constant gradient in the direction parallel to the jump set. Having done the analysis of the local problem, the upper bound \Cref{2dupbd} can then be shown as a consequence of a general theorem of Poliakovsky for proving upper bounds for singular perturbation problems using a one-dimensional ansatz \cite{Pol08}. The idea is that the local problem represents the cost per unit length along the jump set; this can then be made rigorous with the right tools when $\nabla u \in BV(\Omega;\mathbb{R}^2)$ \cite{ConDeL07, Pol08}. \par
For orthonormal vectors $\nu, \tau$, we consider the square $$R := \{(x,z) \in \mathbb{R}^2 : |(x,z) \cdot \nu| \leq 1/2, |(x,z) \cdot \tau | \leq 1/2  \}.$$
If we wish to force a jump set in the limit $\e \to 0$ with normal vector $\nu$, we must choose boundary data on $\{(x,z) \cdot \nu = 1/2 \}$ and $\{(x,z) \cdot \nu = -1/2 \}$ which is compatible with a jump across $\{(x,z) \cdot \nu = 0 \}$. Therefore, we choose $\mp$, $\mm$ such that $$m^+,m^-\in \{m : m_2 = m_1^2/2 \}\textup{ and }\nu \textup{ is parallel to }\mp - \mm$$and define the class
\begin{align}\notag
\mathcal{A}_R := \{u\in H^{1}(R): \nabla u = m^\pm \textup{ when }(x,z)\cdot &\nu = \pm 1/2 \textup{ and ${\color{black}\nabla}u$ is periodic}\\ \notag &\textup{with period }1 \textup{ in the }\tau \textup{ direction}    \}.
\end{align}
The restricted class of one-dimensional competitors is
\begin{align}\notag
\mathcal{A}^{1D}_R := \{u\in\mathcal{A}_R : \nabla u \cdot \tau  =m^{+} \cdot \tau=m^{-} \cdot \tau\textup{ on }R\}.
\end{align}
We set
\begin{align}\notag
r_\e = \inf_{ \mathcal{A}_R} E_\e
\end{align}
and
\begin{align}\notag
r_\e^{1D} = \inf_{ \mathcal{A}_R^{1D}} E_\e.
\end{align}
\begin{proposition}\label{boxrate}
For any $\e>0$, we have
\begin{equation}\label{relation}
|(\Sigma(m^+) - \Sigma(m^-))\cdot \nu | \leq r_\e \leq r_\e^{1D}.
\end{equation}
Furthermore, as $\e \to 0$,
\begin{equation}\label{rate}
\left| r_\e^{1D} - |(\Sigma(m^+) - \Sigma(m^-))\cdot \nu |\right| \leq c_1 e^{-c_2/ \e},
\end{equation}
where the constants $c_1$ and $c_2$ depend only on $\mp$, $\mm$, so that the one-dimensional ansatz is asymptotically minimizing and the cost is given by the jump in $\Sigma$.
\end{proposition}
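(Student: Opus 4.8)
The plan is to establish the two-sided bound \eqref{relation} from the BPS decomposition \eqref{bps}, and then to obtain the exponential rate \eqref{rate} by solving the BPS equation \eqref{bpsequation} restricted to one-dimensional profiles. For the lower bound $|(\Sigma(\mp)-\Sigma(\mm))\cdot\nu|\le r_\e$ I would start from the pointwise identity and inequality in \eqref{divcalc}, $\dive\Sigma(\nabla v)=\dxx v\,(\dz v-\tfrac12(\dx v)^2)\le \tfrac{(\dz v-\frac12(\dx v)^2)^2}{2\e}+\tfrac{\e(\dxx v)^2}{2}$, valid for smooth $v$ and, by a density argument restricted to finite-energy competitors, for any $v\in\mathcal{A}_R$. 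Integrating over $R$ and using the divergence theorem gives $\int_R\dive\Sigma(\nabla v)\,dx\,dz=(\Sigma(\mp)-\Sigma(\mm))\cdot\nu$: the contributions on the two faces with outer normal $\pm\tau$ cancel by the $\tau$-periodicity of $\nabla v$, while the faces $\{(x,z)\cdot\nu=\pm1/2\}$ carry the fixed values $\mp,\mm$ with outer normal $\pm\nu$. Hence $E_\e(v)\ge(\Sigma(\mp)-\Sigma(\mm))\cdot\nu$, and repeating with $-\Sigma$ (the AM--GM step is sign-insensitive) yields $E_\e(v)\ge-(\Sigma(\mp)-\Sigma(\mm))\cdot\nu$, so $r_\e\ge|(\Sigma(\mp)-\Sigma(\mm))\cdot\nu|$. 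The inequality $r_\e\le r_\e^{1D}$ is immediate from the inclusion $\mathcal{A}_R^{1D}\subset\mathcal{A}_R$.

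For \eqref{rate} I would reduce the one-dimensional problem to a scalar ODE. Writing $s=(x,z)\cdot\nu$ and $t=(x,z)\cdot\tau$, any $u\in\mathcal{A}_R^{1D}$ has $\nabla u\cdot\tau\equiv c:=\mp\cdot\tau=\mm\cdot\tau$, so $u=ct+\phi(s)$ and $\nabla u=c\tau+w(s)\nu$ with $w=\phi'$; conversely any such field is curl-free (a direct computation gives $\curl(c\tau+w(s)\nu)=0$) and $\tau$-periodic, hence an admissible gradient. Setting $g(w):=c\tau_2+w\nu_2-\tfrac12(c\tau_1+w\nu_1)^2$, one computes $\dz u-\tfrac12(\dx u)^2=g(w)$ and $\dxx u=\nu_1^2\,w'$, and the identity \eqref{bps} reduces (after integrating out the unit-length $t$-variable) to
\[
E_\e(u)=(\Sigma(\mp)-\Sigma(\mm))\cdot\nu+\frac{1}{2\e}\int_{-1/2}^{1/2}\bigl(g(w)-\e\nu_1^2 w'\bigr)^2\,ds.
\]
Here $\nu_1\neq0$, since a genuine jump forces $(\mp)_1\neq(\mm)_1$, and, orienting $\nu=(\mp-\mm)/|\mp-\mm|$ so the flux is nonnegative by \Cref{jumpexp}, $g$ is a downward parabola with simple roots $w^-=\mm\cdot\nu<w^+=\mp\cdot\nu$ and $g>0$ in between.

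It then remains to choose $w$ making the residual integral exponentially small while meeting the exact endpoint conditions $w(\pm1/2)=w^\pm$. I would take $w_*$ to be the heteroclinic solution of the autonomous front equation $\e\nu_1^2 w'=g(w)$ normalized by $w_*(0)=(w^-+w^+)/2$; since $g'(w^+)<0<g'(w^-)$, linearization at the equilibria gives $|w_*(s)-w^\pm|\le C e^{-c|s|/\e}$, so $|w_*(\pm1/2)-w^\pm|\le C e^{-c/\e}$ and $w_*$ solves \eqref{bpsequation} exactly, with vanishing residual. To restore the exact boundary values I would add a smooth correction $\rho$ supported near $s=\pm1/2$, transitioning over an $O(1)$ length, with $\rho(\pm1/2)=w^\pm-w_*(\pm1/2)$, so that $\|\rho\|_\infty+\e\|\rho'\|_\infty\le Ce^{-c/\e}$. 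Then $w=w_*+\rho\in\mathcal{A}_R^{1D}$, and since $g(w)-\e\nu_1^2 w'=[g(w_*+\rho)-g(w_*)]-\e\nu_1^2\rho'=g'(\xi)\rho-\e\nu_1^2\rho'$, the residual is pointwise $O(e^{-c/\e})$ on a set of measure $O(1)$, whence the integral is $\le c_1 e^{-c_2/\e}$ with $c_1,c_2$ depending only on $\mp,\mm$ through $g$ and $\nu_1$. This gives $r_\e^{1D}\le(\Sigma(\mp)-\Sigma(\mm))\cdot\nu+c_1e^{-c_2/\e}$, which with the lower bound yields \eqref{rate}.

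The main obstacle is the final step: making the ODE analysis rigorous, in particular proving the exponential convergence of the front to its equilibria and engineering the boundary correction so that it simultaneously restores the exact Dirichlet values of $\nabla u$ on the two $\nu$-faces and contributes only an exponentially small amount to the energy. Secondary care is needed with the degenerate direction $\nu_1=0$ (here excluded by the jump condition) and with justifying the divergence theorem for non-smooth competitors in the lower bound, which I would handle by density after restricting to finite-energy $v$.
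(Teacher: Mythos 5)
Your proposal is correct and follows essentially the same route as the paper: the lower bound via the entropy/BPS flux through the faces of $R$ (with the $\tau$-faces cancelling by periodicity), and the upper bound via a one-dimensional heteroclinic profile solving the first-order BPS equation with exponentially small boundary corrections --- your ODE for $w$ is the paper's \eqref{ivp} after the affine change $w=\mm\cdot\nu+g\,|\mp-\mm|$. The only cosmetic differences are that the paper evaluates the profile's energy directly (forcing equality in the AM--GM step and changing variables to land on \Cref{jumpexp}) and restores the boundary data by linear interpolation on the outer quarter regions, whereas you keep the exact decomposition \eqref{bps} so the flux term is automatic and only the residual needs estimating.
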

\begin{proof}
The first inequality in \eqref{relation} is an immediate consequence of crucial calculation {\eqref{lscineq}} and the choice of boundary data for $\mathcal{A}_R$. The second follows since $\mathcal{A}_R^{1D} \subset \mathcal{A}_R$, so it remains to prove \eqref{rate}.\par
We construct a sequence of one-dimensional competitors $ u_\e$ such that $E_\e(u_\e)$ approaches $|(\Sigma(m^+) - \Sigma(m^-))\cdot \nu |$ at the desired rate. The techniques in such a construction are well-known in the calculus of variations, but we include a proof for the sake of completeness. For each $\e$, we will define $\nabla u_\e$ via the following ansatz:
\begin{align}\label{grads}
\nabla u_\e  &= g_\e((x,z)\cdot \nu)(\mp - \mm) + m^-\\ \notag
&= g_\e((x,z)\cdot \nu) p + \mm.
\end{align}
Here $p = \mp - \mm$ as in \Cref{jumpexp} and $g_\e:[-1/2,1/2] \to [0,1]$ is increasing and satisfies $g_\e(-1/2)=0$, $g_\e(1/2)=1$. It is easy to check that since $p$ is parallel to $\nu$,
$$
\curl  (g_\e((x,z)\cdot \nu) p + \mm)=0,
$$
so that it is possible to find $u_\e$ whose gradient is given by \eqref{grads}, and that $\nabla u_\e$ satisfies the boundary conditions required to be a member of $\mathcal{A}_R^{1D}$. Since the energy $E_\e$ does not depend explicitly on $u_\e$ but only on its gradient, for the rest of the proof, we will, with a slight abuse of notation, refer to the energy $E_\e(\nabla u_\e)$ without making an explicit choice of $u_\e$. \par
Next, let $g$ be the local solution of the following initial value problem:
\begin{equation}\label{ivp}
\begin{cases} g'(t) = \displaystyle\frac{|g p_2 + \mm_2 -(gp_1 + \mm_1 )^2/2|}{p_1 n_1}=: W(g), \\
g'(0)=1/2.\end{cases}
\end{equation}
The vectors $p$ and $n$ are given by $p= \mp - \mm$ and $n = p /|p|$. The first components $n_1$, $p_1$ cannot be zero since $\mp$ and $\mm$ lie on the parabola $m_2 = (m_1)^2/2$. Here $W$ is chosen so that when we plug $g( \cdot / \e)$ into the ansatz \eqref{grads} and calculate the resulting energy density, equality is achieved in \eqref{divcalc}. This is the same idea as the transition layer ansatz for the Modica-Mortola problem \cite{ModMor77} or the Aviles-Giga problem \cite{AviGig87}. We collect some properties of the solution $g$, all of which follow from the facts that $W \geq 0$ and vanishes linearly at $0$ and $1$ (see for example \cite[Equation (1.21)]{Ste88}).
\begin{enumerate}[(i)]
\item The solution $g$ is increasing and exists for all time, and
\item there exist $c_1$, $c_2$ depending only on $W$ and thus on $\mp$, $\mp$, such that
\begin{equation}\label{expdecay}
|1- g(t)|\leq c_1e^{-c_2t}\textup{ as }t \to \infty \textup{ and }\quad|g(t)| \leq c_1e^{c_2t}\textup{ as }t\to -\infty
.\end{equation}
\end{enumerate}
The constants $c_i$ will implicitly change from line to line but will always depend only on $\mp$ and $\mm$ and not on $\e$.\par
We would like to define $g_\e = g(t/\e)$ in \eqref{grads}; however, $g(t /\e)$ does not satisfy the boundary conditions at $\pm 1/2$. To account for this, we will linearly interpolate if $|(x,z) \cdot \nu)| > 1/4$ and use the rescaled $g$ elsewhere. We set
\begin{equation}\notag
\nabla u_\e (x,z) =\begin{cases} \left[ g\left( \displaystyle\frac{1}{4\e}\right)+4\left(1- g\left( \displaystyle\frac{1}{4\e}\right)\right)\left(\nu \cdot (x,z) - \displaystyle\frac{1}{4}\right)\right]p + \mm  & \mbox{if } (x,z)\cdot \nu \geq \displaystyle\frac{1}{4}, \\
g\left(\displaystyle\frac{{\color{black}\nu} \cdot (x,z)}{\e} \right)p+\mm & \mbox{if } |(x,z)\cdot \nu |\leq \displaystyle\frac{1}{4},\\
\left[ g\left( -\displaystyle\frac{1}{4\e}\right)+4g\left( -\displaystyle\frac{1}{4\e}\right)\left(\nu \cdot (x,z) + \displaystyle\frac{1}{4}\right)\right]p + \mm & \mbox{if } (x,z)\cdot \nu \leq -\displaystyle\frac{1}{4}.
\end{cases}
\end{equation}
It is straightforward to check that due to \eqref{expdecay}, 
\begin{equation}\label{uest}
%|\nabla u_\e|^2, 
{\color{black}(\dxx u_\e)^2}, \left( \dz u_\e - \frac{1}{2}(\dx u_\e)^2\right)^2 \leq c_1 e^{-c_2/\e} \textup{ when } |(x,z)\cdot \nu| \geq 1/4.
\end{equation}\par
By \eqref{relation}, to finish the proof, it suffices to show that $\nabla u_\e$ satisfy
\begin{equation}\label{finalest}
E_\e ( \nabla u_\e) \leq |(\Sigma(\mp) - \Sigma(\mm))\cdot \nu | + c_1 e^{-c_2 /\e}
.\end{equation}
Let us split up the energies as
\begin{align}\notag
E_\e (\nabla u_\e) &= \frac{1}{2}\int_R \left\{ \displaystyle\frac{\left(\dz u_\e - \frac{1}{2}(\dx u_\e)^2\right)^2}{\e} + \e (\dxx u_\e)^2\right\}\,dx\,dz \\ \notag
&= \frac{1}{2}\int_{\{| (x,z) \cdot \nu |\leq 1/4 \}} \left\{ \displaystyle\frac{\left(\dz u_\e - \frac{1}{2}(\dx u_\e)^2\right)^2}{\e} + \e (\dxx u_\e)^2\right\}\,dx\,dz \\ \notag &\quad +\frac{1}{2}\int_{\{| (x,z) |\cdot \nu > 1/4 \}} \left\{ \displaystyle\frac{\left(\dz u_\e - \frac{1}{2}(\dx u_\e)^2\right)^2}{\e} + \e (\dxx u_\e)^2\right\}\,dx\,dz \\ \notag
&:= I_\e^1 + I_\e^2.
\end{align}
First, from \eqref{uest}, we have
\begin{equation}
I_\e^2 \leq c_1 e^{-c_2/\e}.
\end{equation}
For $I_\e^1$, we write
\begin{align}\notag
I_\e^1 &=\frac{1}{2}\int_{\{| (x,z) \cdot \nu |\leq 1/4 \}} \left\{ \displaystyle\frac{\left(\dz u_\e - \frac{1}{2}(\dx u_\e)^2\right)^2}{\e} + \e (\dxx u_\e)^2\right\}\,dx\,dz \\ \notag
&= \frac{1}{2}\int_{-1/4}^{1/4}\left\{  \frac{\left(g(t/\e)p_2 + \mm_2  - (g(t/\e)p_1 + \mm_1)^2/2\right)^2}{\e}+\frac{g'(t/\e)^2p_1^2 \nu_1^2}{\e} \right\} \,dt .
\end{align}
Now $\nu \parallelsum n$ and both are unit vectors, so we can substitute $\nu_1^2 = n_1^2$. Then by \eqref{ivp}, we have
\begin{align} \notag
I_\e^1 &= \frac{1}{2}\left| \int_{-1/4}^{1/4} \frac{2}{\e} g'(t/\e)p_1 n_1\left( g(t/\e)p_2 + \mm_2  - (g(t/\e)p_1 + \mm_1)^2/2\right)  \, dt\right| \\ \notag
&=\left| \int_{g(-1/(4\e))}^{g(1/(4\e))}  p_1 n_1 \left( sp_2 + \mm_2  - (sp_1 + \mm_1)^2/2\right)  \, ds\right| \\ \notag
&= \left| \int_{g(-1/(4\e))}^{g(1/(4\e))}  p_1 n_1 \left( sp_2 + \mm_2  - s^2p_1^2/2 - sp_1\mm_1 - (\mm_1)^2/2 \right)  \, ds\right| \\ \notag
&= \left| \int_{g(-1/(4\e))}^{g(1/(4\e))}  p_1 n_1 \left( sp_2   - s^2p_1^2/2 - sp_1\mm_1 \right)  \, ds\right| .
\end{align}
In the last line we use $\mm_2 = (\mm_1)^2/2$ to cancel two of the terms in parentheses. Since $g(\pm 1 / (4\e))$ approaches 1 and 0 exponentially as $\e \to 0$, we can finish the calculation:
\begin{align} \notag
I_\e^1 &\leq   \left| \int_{0}^{1}  p_1 n_1 \left( sp_2   - s^2p_1^2/2 - sp_1\mm_1 \right)  \, ds\right| + c_1 e^{-c_2 / \e} \\ \notag
&= \left|\frac{n_1}{2}(p_1p_2 - p_1^3/3 - p_1^2\mm_1 )\right| + c_1 e^{-c_2 / \e} \\ \notag
&= \left|(\Sigma(\mp) - \Sigma(\mm))\cdot \nu \right|+ c_1 e^{-c_2 / \e} \quad \textup{by \Cref{jumpexp}}.
\end{align}
Thus
\begin{equation}\notag
E_\e(\nabla u_\e) = I_\e^1 + I_\e^2 \leq \left|(\Sigma(\mp) - \Sigma(\mm))\cdot \nu \right|+ c_1 e^{-c_2 / \e},
\end{equation}
as desired.
\end{proof}\par
Finally, we can prove the upper bound \Cref{2dupbd}. We quote a theorem from \cite{Pol08} which is valid in any dimension and for a wide range of energy densities. Let us write the version that applies to our problem.
\begin{theorem}[Theorem 1.2 from \cite{Pol08}]\label{polthm}
Let $\Omega$ be a bounded $C^2$-domain and let 
$$
F(a,b) : \mathbb{R}^{2\times 2} \times \mathbb{R}^2  \to \mathbb{R}
$$
be a $C^1$ function satisfying $F \geq 0$. Let $u \in W^{1,{\infty}}(\Omega)$ be such that $\nabla u \in BV(\Omega;\mathbb{R}^2)$ and $F(0,\nabla u(x))=0$ a.e. in $\Omega$. Then there exists a family of functions $\{ u_\e \} \subset C^2(\mathbb{R}^2)$ satisfying
\begin{equation}\notag
u_\e \to u \textit{ in }W^{1,{p}}(\Omega) \textit{ for }1\leq p < \infty
%\textit{ for all } p\in[1,\infty),
\end{equation}
and
\begin{align}\notag
&\lim_{\e \to 0} \frac{1}{\e} \int_\Omega F(\e \nabla^2 u_\e, \nabla u_\e)\,dx \,dz \\ \notag
&=\int_{J_{\nabla u}} \inf_{r \in \mathcal{R}_{\chi(x,z),0}} \left\{ \int_{-\infty}^\infty F\left(-r'(t) \nu(x,z) \otimes \nu(x,z), r(t)\nu(x,z)+\nabla u^-(x,z)\right) \,dt\right\}d\mathcal{H}^1.
\end{align}
Here $\chi(x,z)$ is given by
\begin{equation}\notag
\chi(x,z) \nu(x,z) = \nabla u^+(x,z)-\nabla u^-(x,z),
\end{equation}
and
$$
\mathcal{R}_{\chi(x,z),0}:=\{r(t) \in C^1(\mathbb{R}): \exists L>0 \textit{ s.t. } r(t)= \chi(x,z)  \textit{ for }t\leq -L, r(t) = 0\textit{ for }t\geq L\}
.$$
\end{theorem}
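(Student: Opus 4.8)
The plan is to prove the stated equality by building an explicit recovery sequence whose rescaled energy densities $\tfrac{1}{\e}F(\e\nabla^2 u_\e,\nabla u_\e)\,\mathcal{L}^2$ concentrate, as $\e\to 0$, exactly on $J_{\nabla u}$ with surface density equal to the one-dimensional cell cost
\[
g(x,z):=\inf_{r\in\mathcal{R}_{\chi(x,z),0}}\int_{\R}F\big(-r'(t)\,\nu\otimes\nu,\,r(t)\nu+\nabla u^-\big)\,dt.
\]
Since the assertion is an equality of limits, I would establish both a $\limsup\le\int_{J_{\nabla u}}g\,d\mathcal{H}^1$ upper bound, obtained from the construction, and a matching $\liminf\ge$ lower bound for the same sequence, obtained by slicing.

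First I would treat the model case of a single planar interface: $\nabla u\equiv m^-$ on $\{(x,z)\cdot\nu>0\}$ and $\nabla u\equiv m^+$ on $\{(x,z)\cdot\nu<0\}$, with $m^+-m^-=\chi\nu$ and $F(0,m^\pm)=0$. Fix a profile $r\in\mathcal{R}_{\chi,0}$ within $\delta$ of the infimum defining $g$, write $s=(x,z)\cdot\nu$, and set $\nabla u_\e:=r(s/\e)\,\nu+m^-$. Because $r(s/\e)\nu$ depends only on $s$ and $\nu$ is fixed, a direct computation gives $\curl\nabla u_\e=0$, so $\nabla u_\e$ is genuinely a gradient; moreover $\e\nabla^2 u_\e=r'(s/\e)\,\nu\otimes\nu$ (the sign matching the statement after orienting $\nu$). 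Slicing $\Omega$ by the level sets of $s$ and substituting $t=s/\e$ turns $\tfrac1\e\int_\Omega F(\e\nabla^2 u_\e,\nabla u_\e)$ into $\int_{J}\!\int_{\R}F(-r'(t)\nu\otimes\nu,\,r(t)\nu+m^-)\,dt\,d\mathcal{H}^1$, with the truncation errors coming from $r$ being eventually constant vanishing in the limit. This yields the model case up to the arbitrary $\delta$.

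To globalize, I would first approximate the general $u$ by maps $u_\delta$ whose gradients are piecewise constant on a locally finite polyhedral partition of $\Omega$, chosen so that $u_\delta\to u$ in $W^{1,p}$ and $\int_{J_{\nabla u_\delta}}g\,d\mathcal{H}^1\to\int_{J_{\nabla u}}g\,d\mathcal{H}^1$. On the relative interior of each segment of the polyhedral jump set $J_{\nabla u_\delta}$ the model construction applies with the local data $(m^-,m^+,\nu)$, the profiles being inserted in a collar of width $\sim\e$ about each segment by splicing at the level of $u_\e$ (not $\nabla u_\e$) so that the field remains a gradient across overlaps. Near the finitely many corner points where segments meet I would interpolate crudely; the collars have total area $O(\e)$, so after dividing by $\e$ they contribute at leading order, while the corner regions occupy area $o(\e)$ and hence drop out. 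A mollification at scale $\ll\e$ then delivers the required $C^2$ regularity without altering the energy to leading order.

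A diagonal argument over $\delta$ with a suitable $\e(\delta)\to 0$ produces a single family $\{u_\e\}$ with $u_\e\to u$ in $W^{1,p}$ for all $1\le p<\infty$ and $\limsup_\e \tfrac1\e\int_\Omega F\le\int_{J_{\nabla u}}g\,d\mathcal{H}^1$. The matching lower bound for this family comes from slicing: on each line transverse to the local normal the rescaled restriction of $\nabla u_\e$ is an admissible competitor in the cell problem, so its one-dimensional energy is at least $g$, and integrating over $J_{\nabla u}$ gives $\liminf\ge\int_{J_{\nabla u}}g\,d\mathcal{H}^1$. I expect the main obstacle to be the polyhedral approximation step together with the continuity of the surface functional $u\mapsto\int_{J_{\nabla u}}g\,d\mathcal{H}^1$ under it: the jump set is only countably $1$-rectifiable, the cell cost $g$ is itself an infimum (hence a priori only lower semicontinuous) in the data $(\chi,\nu)$, and one must approximate the rectifiable interface by segments while simultaneously controlling the jump values and normals so that the surface energy converges, all the while preserving the rigid curl-free coupling between the admissible traces $m^\pm$ and the interface geometry.
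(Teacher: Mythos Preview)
The paper does not prove this theorem at all: it is quoted verbatim as ``Theorem 1.2 from \cite{Pol08}'' and then invoked as a black box in the short proof of \Cref{2dupbd}. There is therefore no proof in the paper to compare your proposal against.

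As for the proposal itself, your outline is a plausible high-level sketch of one route to results of this type, closer in spirit to the Conti--De Lellis approach \cite{ConDeL07} (polyhedral approximation of the jump set, local one-dimensional profile insertion, corner patching, diagonalization) than to Poliakovsky's own method \cite{Pol08}, which is built instead on mollification of $u$ by carefully chosen anisotropic kernels and a delicate analysis of the resulting energy densities. Your identification of the polyhedral approximation step as the main difficulty is apt: controlling simultaneously the jump set geometry, the traces $m^\pm$, the normals, and the curl-free constraint so that the surface energy converges is precisely where the technical weight of \cite{ConDeL07} lies, and it is substantially more involved than your sketch suggests. The slicing argument you give for the matching $\liminf$ on the constructed sequence is also too loose as written: the restriction of $\nabla u_\e$ to a transverse line is not literally a competitor in the cell class $\mathcal{R}_{\chi,0}$ without further truncation and error control. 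None of this is fatal to the strategy, but turning the sketch into a proof is a paper-length undertaking, which is exactly why the present authors cite \cite{Pol08} rather than reprove it.
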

\begin{proof}[Proof of \Cref{2dupbd}]
In the notation of the statement of \Cref{polthm}, we take
\begin{equation}\notag
F(a,b) = (b_2 - b_1^2/2){\color{black}^2}/2 + a_{11}^2/2,
\end{equation}
so that
\begin{equation}\notag
\frac{1}{\e} \int_\Omega F(\e \nabla^2 u_\e, \nabla u_\e)\,dx \,dz = E_\e (u_\e).
\end{equation}
By rescaling and applying \Cref{boxrate}, we find that the infimum in \Cref{polthm} is in fact $|(\Sigma(\nabla u^+) - \Sigma(\nabla u^-))\cdot \nu|$. The proof of \Cref{2dupbd} is complete.
\end{proof}
\begin{remark}
\Cref{polthm} does not deal with specifying boundary conditions for the recovery sequence, and so we have not included this in our analysis either. In the Aviles-Giga problem, this was handled in \cite[Theorem 1.1]{Pol07} and \cite[Section 6]{ConDeL07}, and so similar techniques could apply here as well. 
\end{remark}
{\color{black}\begin{remark}
Ignat and Monteil \cite[Proposition 4.19]{IgnMon20} introduced a systematic approach for determining the 1d symmetry for minimizers of variational integrals of the form
\begin{equation*}
E(m)=\int\frac{1}{2}|\nabla m|^2+W(m)  \text{ with } \nabla \cdot m=0.
\end{equation*}
Denoting by $\Pi_0$ the projection onto traceless matrices, the map $\Phi$ defined in \Cref{entremark} satisfies the ``strong punctual condition" 
\begin{equation*}
| \Pi_0 \nabla \Phi(m)|^2 \leq W(m)
\end{equation*}
from \cite{IgnMon20}, which underlies the fact that the divergence of the entropy bounds the energy from below. In the case where the limiting jump for $\nabla u$ has normal vector $\hat x$, the results of \cite{IgnMon20} then yield the optimality and uniqueness of the 1d profile. %For the general case where $\nu \neq \hat x$, we do not pursue the question of uniqueness.
\end{remark}
}
\section{Appendix}  We prove \Cref{jumpexp}.
\begin{proof}[Proof of \Cref{jumpexp}]
First, we prove
\begin{equation}\label{firstexp}
(\Sigma(\mp) - \Sigma(\mm)) \cdot n = \frac{n_1}{2} \left(p_1p_2-m^-_1p_1^2 -\frac{1}{3}p_1^3 \right).
\end{equation}
Let us record the identities
\begin{align}\label{idents}
n_1 p_2 = n_2 p_1 ,\quad \textup{and}\quad\mm_2 = (\mm_1)^2/2.
\end{align}
We calculate
\begingroup
\allowdisplaybreaks
\begin{align}\notag
(\Sigma(m^+) - \Sigma(m^-))\cdot n &= \left( \mp_1 \mp_2 - \frac{1}{6}(\mp_1)^3 \right)n_1 - \frac{1}{2}(\mp_1)^2n_2\\ \notag  &\quad- \left( \mm_1 \mm_2 - \frac{1}{6}(\mm_1)^3 \right)n_1 + \frac{1}{2}(\mm_1)^2n_2 \\ \notag
&= n_1 \left[\mp_1\mp_2 - \mm_1\mm_2 - \frac{1}{6}(\mp_1)^3 +\frac{1}{6}(\mm_1)^3\right] \\ \notag &\quad+ \frac{1}{2}n_2 \left[(\mm_1)^2 - (\mp_1)^2\right] \\ \notag
&= n_1 \left[(\mm_1+p_1)(\mm_2+p_2) - \mm_1\mm_2 - \frac{1}{6}\left(\mm_1+p_1 \right)^3 + \frac{1}{6}(\mm_1)^3\right] \\ \notag
&\quad+ \frac{1}{2}n_2 \left[(\mm_1)^2 - (\mm_1+p_1)^2 \right] \\ \notag
&=  n_1 \left[p_1p_2 + p_1\mm_2 + p_2 \mm_1- \frac{1}{2}(\mm_1)^2p_1- \frac{1}{2}\mm_1p_1^2 - \frac{1}{6}p_1^3\right] \\ \notag
&\quad+ \frac{1}{2}n_2 \left[-2p_1\mm_1 - p_1^2 \right] .
\end{align}
Notice that the second and fourth terms in the first bracket add to 0 by the second identity in \eqref{idents}. Continuing on and then using the first identity in \eqref{idents} to cancel $n_1\mm_1p_2 - n_2p_1\mm_1$, we have
\begin{align}\notag
(\Sigma(m^+) - \Sigma(m^-))\cdot n&= n_1 \left[p_1p_2 + p_2 \mm_1- \frac{1}{2}\mm_1p_1^2 - \frac{1}{6}p_1^3\right] + \frac{1}{2}n_2 \left[-2p_1\mm_1 - p_1^2 \right]  \\ \notag
&= n_1\left[ p_1p_2  -\frac{1}{2}\mm_1  p_1^2 - \frac{1}{6}p_1^3\right]-  \frac{1}{2}n_2p_1^2.
\end{align}
Finally, we finish the proof of \eqref{firstexp} by again using \eqref{idents} to rewrite $n_1 p_1 p_2 - n_2 p_1^2/2$ as $n_1 p_1 p_2 - n_1 p_1p_2/2=n_1p_1p_2/2$, which gives
\begin{align}\notag
(\Sigma(m^+) - \Sigma(m^-))\cdot n = n_1\left[\frac{1}{2} p_1p_2  - \frac{1}{2}\mm_1 p_1^2- \frac{1}{6}p_1^3\right] .
\end{align}
\endgroup
\par
%\color{black}
Moving on to the second expression for $(\Sigma(m^+) - \Sigma(m^-))\cdot n$, we show
\begin{equation}\label{secondexp}
(\Sigma(\mp) - \Sigma(\mm)) \cdot n = \displaystyle\frac{|m_1^+ - m_1^-|^3}{12\sqrt{1 +\frac{1}{4}(m_1^+ + m_1^-)^2}}.
\end{equation}
The calculation is straightforward. We write $n$ as
\begin{align*}\notag
n = \frac{\mp - \mm}{|\mp-\mm|}=\frac{\left(\mpo,\frac{1}{2}(\mpo)^2\right)-\left(\mmo,\frac{1}{2}(\mmo)^2\right)}{\sqrt{(\mpo-\mmo)^2+\frac{1}{4}\left((\mpo)^2-(\mmo)^2\right)^2}}
\end{align*}
and notice that if $m_2 = \frac{1}{2}m_1^2$, then
\begin{equation*}
\Sigma(m) = \left(\frac{1}{3}m_1^3, -\frac{1}{2} m_1^2 \right).
\end{equation*}
Thus
\begingroup
\allowdisplaybreaks
\begin{align*}
(\Sigma&(\mp) - \Sigma(\mm)) \cdot n \\ 
&= \left( \frac{1}{3}(\mpo)^3 - \frac{1}{3}(\mmo)^3, \frac{1}{2}(\mmo)^2 - \frac{1}{2}(\mpo)^2\right)\cdot \frac{\left(\mpo,\frac{1}{2}(\mpo)^2\right)-\left(\mmo,\frac{1}{2}(\mmo)^2\right)}{\sqrt{(\mpo-\mmo)^2+\frac{1}{4}\left((\mpo)^2-(\mmo)^2\right)^2}} \\
&= \frac{(\mpo-\mmo)}{2}\left(\frac{2}{3}\left((\mpo)^2+\mpo\mmo + (\mmo)^2\right), -(\mpo+\mmo) \right) \\
& \quad\cdot \frac{(\mpo-\mmo)\left(1,\frac{1}{2}(\mpo+\mmo)\right)}{|\mpo-\mmo|\sqrt{1+\frac{1}{4}(\mpo+\mmo)^2}}\\
&= \frac{(\mpo-\mmo)^2}{2|\mpo-\mmo|}\frac{\left(\frac{2}{3}(\mpo)^2 + \frac{2}{3}(\mpo\mmo) + \frac{2}{3}(\mmo)^2 - \frac{1}{2}(\mpo+\mmo)^2 \right)}{\sqrt{1+\frac{1}{4}(\mpo+\mmo)^2}}\\
&= \frac{|\mpo-\mmo|}{2}\frac{\left(\frac{1}{6}(\mpo)^2 + \frac{1}{6}(\mmo)^2 - \frac{1}{3}\mpo\mmo \right)}{\sqrt{1+\frac{1}{4}(\mpo+\mmo)^2}}\\
&= \frac{|\mpo-\mmo|^3}{12\sqrt{1+\frac{1}{4}(\mpo+\mmo)^2}}.
\end{align*}
\endgroup
\end{proof}
\color{black}

\textbf{ACKNOWLEDGMENTS}
We thank Robert V. Kohn for bringing this problem into our interest. {\color{black} We also thank both referees and the editor for helpful comments. M. N.'s research is supported by NSF grant RTG-DMS 1840314.} X.Y's
research is supported by a Research Excellence Grant from University of
Connecticut.

\FloatBarrier 
\bibliographystyle{siam}
\bibliography{references}
\end{document}